\newtheorem{theorem}{Theorem}[section]
\newtheorem{lemma}[theorem]{Lemma}
\newenvironment{proof}[1][Proof]{\begin{trivlist}
\item[\hskip \labelsep {\bfseries #1}]}{\end{trivlist}}
\newenvironment{remark}[1][Remark]{\begin{trivlist}
\item[\hskip \labelsep {\bfseries #1}]}{\end{trivlist}}
\begin{document}

\begin{frontmatter}

\title{ A compact high order Alternating Direction Implicit method for three-dimensional  acoustic wave equation with variable coefficient}

\author[add1]{Keran Li}
\ead{keran.li1@ucalgary.ca}
\author[add1]{Wenyuan Liao\corref{cor1}}
\ead{wliao@ucalgary.ca}
\author[add1]{Yaoting Lin}
\ead{yaoting.lin@ucalgary.ca}
\cortext[cor1]{Corresponding author}

\address[add1]{Department of Mathematics \& Statistics, University of Calgary, AB, T2N 1N4, Canada}

\begin{abstract}
Efficient and accurate numerical simulation of seismic wave propagation is important in various Geophysical applications such as 
seismic full waveform  inversion (FWI) problem. However,
due to the large size of the physical domain and requirement on low numerical dispersion, many existing
numerical methods are inefficient  for numerical modelling of seismic wave propagation in an heterogenous media.  
Despite the great efforts that have been devoted during the past decades, it still remains a challenging task
 in the development of efficient and accurate finite difference method
for multi-dimensional acoustic wave equation  with variable velocity.
In this paper we proposed a Pad\'{e}  approximation based finite difference scheme for solving 
the acoustic wave equation in three-dimensional heterogeneous media.
The new method is obtained by combining the Pad\'{e} approximation and a novel algebraic manipulation. 
The efficiency of the new algorithm is further improved through the Alternative Directional Implicit (ADI) method. 
The stability of the new algorithm has been theoretically proved by energy method. The new method is conditionally stable with a better 
 Courant - Friedrichs - Lewy condition (CFL)  condition, which has been verified numerically.  Extensive numerical examples have been solved, which 
demonstrated that   the new method is accurate, efficient and stable.
\end{abstract}

\begin{keyword}
Acoustic Wave Equation \sep Compact Finite Difference Method \sep Pad\'{e} Approximation \sep Alternative Direction Implicit  \sep Heterogenous Media.

\PACS  65M06, 65M32, 65N06
\end{keyword}
\end{frontmatter}

\section{Introduction}
\label{sec:1}
Finite difference (FD) method  has been widely used 
in various science and engineering applications for several reasons such as easy implementation,  high efficiency etc, when
 the analytical solution is not available. 
One example is the acoustic wave equation when a non-zero point source function is included in the equation. 
 In particular, the high-order FD methods
have  attracted the interests of many researchers working on  
seismic modelling (see \cite{Chen2007, Cohen1996, Liu2009a,Takeuchi2000} and references therein)  due to
the high-order accuracy and  effectiveness in suppressing numerical dispersion. Moreover, the number of grid points per wavelength
required by higher order FD methods is significantly less than that of the conventional FD methods.

Recently, a great deal of efforts have been devoted to develop high-order FD schemes 
for the  acoustic equations, and many accurate and
efficient methods have been developed. Levander \cite{Levander1988} addressed the cost-effectiveness of solving real 
problems using high-order spatial derivatives to allow a more coarse spatial sample rate.
In \cite{Liu2009a}, the authors used a plane wave theory and the Taylor series expansion to develop a low dispersion 
time-space domain FD scheme with error O$({\tau}^2+h^{2M})$ for 1-D, 2-D and 3-D acoustic wave equations,
where $\tau$ and $h$ represent the time step and spatial grid size, respectively.
It was then shown  that, along certain fixed directions the error can be improved to $O({\tau}^{2M} + h^{2M})$.
In \cite{Cohen1996}, Cohen and Poly extended the works of Dablain \cite{Dablain1986}, Shubin and Bell \cite{Shubin1987} and Bayliss et al. \cite{Bayliss1986}
and developed a fourth-order accurate explicit scheme with error of O$({\tau}^4+h^{4})$ to solve the heterogeneous acoustic
wave equation.
Moreover, it has been reported that highly accurate numerical methods are very effective in suppressing the annoying numerical dispersion \cite{Finkelstein2007}. 
High-order FD method is of particular importance for large-scale 3D acoustic wave equation, as it requires less grid points\cite{Etgen2007}.

These  methods are accurate but are non-compact, which give rise to two issues: efficiency and
difficulty in boundary condition treatment. 
For example, the conventional non-compact fourth-order FD scheme requires
a five-point stencil in 1D to approximate $u_{xx}$,  while in 2D, a  9-point stencil is needed in  approximating  $\Delta u$.  In 3D problem,  
a 13-point stencil is required to approximate the derivative $u_{xx} + u_{yy} + u_{zz}$ with fourth-order accuracy.
To overcome these difficulties,  a variety of compact higher-order FD
schemes  have been developed for hyperbolic, parabolic  and elliptical 
partial differential equations. In \cite{Chu1998}, the authors developed a
 family of fourth-order three-point combined difference schemes to 
approximate the first- and second-order spatial derivatives.
  In \cite{Kim2007}, the authors introduced a family of three-level implicit FD schemes 
which incorporate the locally one-dimensional method. 
For more recent compact higher-order difference methods, the readers are referred to
\cite{Shukla2005}.

 For three-dimensional problems, an implicit scheme results in 
 a block tridiagonal system, which is  solved at each time step. Direct solution of such large block  linear system is very inefficient, 
therefore, some operator splitting techniques are used to convert 
the three-dimensional problem  into a sequence of one-dimensional problems.  One widely used method is the ADI method, which was originally introduced by Peaceman and Rachford \cite{Peaceman1955}
to solve parabolic and elliptic equations. 
Since then a lot of developments have been made over the years for  
hyperbolic equations \cite{Douglas1966,Lees1962}. 
Later, Fairweather and Mitchell  \cite{Fairweather1965} developed a fourth-order compact ADI scheme  for solving the wave equation. 
Some other related work can be found in \cite{Li1991, Ristow1997}. 
Combined with Pad\'{e} approximation of the finite difference operator,
some efficient and high-order compact finite difference methods have been developed to solve the acoustic wave equations
in 2D and 3D with constant velocity \cite{Das2014, Liao2014}. However when the velocity is a spatially varying function, it is difficult to apply this technique because the algebraic manipulation  is not applicable here. 
Nevertheless, some  research work on accurate and low-dispersion
numerical simulation of  acoustic wave propagation in heterogeneous media have been reported\cite{ Zhang2011}, in which 
 a layered model consisting of multiple horizontal homogeneous layers was considered, with 
the method development and stability analysis  are based on constant velocity model.
 
In \cite{Liao2018} a new fourth-order compact ADI FD scheme was proposed to solve  the two-dimensional acoustic wave equation
with spatially variable velocity. 
Here we  extended this method and the Pad\'{e} approximation based high-order compact FD 
scheme in \cite{Das2014}  to the 3D acoustic wave equation with non-constant velocity.  
The new method is compact and efficient, with  fourth-order  accuracy in both time and space. 
 To our best knowledge, it is the first time that the energy method has been used to conduct theoretical stability analysis of 
the Pad\'{e}-approximation based compact scheme. It has been shown that the obtained stability condition (CFL condition)
is highly consistent to the CFL condition obtained by other methods.
The rest of the paper is organized as the follows.
We first give a  brief introduction of the acoustic wave equation and several existing standard second-order central FD
 schemes, compact high-order FD schemes for spatial 
derivatives and some other related high-order method in Section 2, then
derive the new compact fourth-order ADI FD scheme in Section  3. Stability analysis of the new method is presented in Section 4, 
 which is followed by three numerical  examples 
in Section 5.  Finally, the conclusions and some future works are discussed in Section 6.

\section{Acoustic wave equation and existing algorithms}
Consider  the  3D acoustic wave equation  
\begin{eqnarray}
& &  \hspace{-0.2in} u_{tt}   =  {\nu}^2(x,y,z) (u_{xx} + u_{yy} + u_{zz} ) + s(x,y,z,t),  (x,y,z,t)\,\in\, \Omega \,\times \, [0,T],\label{exact}\\
& & \hspace{-0.2in}   u(x,y,z,0)  =  f_1(x,y,z), \quad  (x,y,z)\,\in \Omega,\label{init}\\
& &  \hspace{-0.2in}  u_t(x,y,z,0) =  f_2(x,y,z), \quad (x,y,z)\,\in \Omega \label{init-2}\\
& &  \hspace{-0.2in}  u(x,y,z,t)   =    g(x,y,z,t),\quad (x,y,z,t)\,\in\, \partial \Omega \,\times \, [0,T], \label{bdry-exact}
\end{eqnarray}
where  $\nu(x,y,z)$ represents  the wave velocity.  Here $\Omega \subset R^3$ is a finite computational domain and
$s(x,y,z,t)$ is the source function.  We denote  $c(x,y,z)  = {\nu}^2(x,y,z)$ for the sake of simple notation.

First  assume  that $\Omega$ is a 3D rectangular  box : $[x_0,  \ x_1] \times [y_0,  \  y_1 ] \times [z_0, \  z_1]$, which is discretized into an
 $N_x \times N_y \times N_z$ grid with spatial grid sizes  $
h_x  $,  $ h_y $ and $h_z$.
Let  $\tau $ be  the time stepsize and 
$u_{i,j,k}^n$ denote the numerical solution at the grid point $(x_i,y_j,z_k)$ and time level $n \tau$.
Let's  first define  the standard central difference operators 
\[
{\delta}_{t}^2   u_{i,j,k}^{n}=  u_{i,j,k}^{n-1}-2u_{i,j,k}^{n}+u_{i,j,k}^{n+1},
\]
\[
{\delta}_{x}^2 u_{i,j,k}^{n}   = u_{i-1,j,k}^n -2u_{i,j,k}^{n}+u_{i+1,j,k}^{n},
\]
\[
{\delta}_{y}^2 u_{i,j,k}^{n}   = u_{i,j-1,k}^n -2u_{i,j,k}^{n}+u_{i,j+1,k}^{n},
\]
\[
 {\delta}_{z}^2 u_{i,j,k}^{n}   = u_{i,j,k-1}^n -2u_{i,j,k}^{n}+u_{i,j,k+1}^{n}.
\]
The standard second-order central FD schemes are then given by
 approximating the  second derivatives in Eq.(\ref{exact}). For example, 
\begin{eqnarray}
u_{tt}(x_{i},y_{j}, z_k, t_{n})  & = & \frac{1}{{\tau}^2} \ {\delta}_t^2   u_{i,j,k}^{n}  + O({\tau}^2), \label{dt2}\\
u_{xx}(x_{i},y_{j},z_k,t_{n}) & =  & \frac{1}{h_x^2 } \ {\delta}_x^2 u_{i,j,k}^{n}  +O(h_x^2). \label{dx2}
\end{eqnarray}
The approximations of  $u_{yy}$ and $u_{zz}$ are similar to Eq. (\ref{dx2}).

To improve the method to fourth-order in space,
the conventional high-order FD method was derived by approximating the  derivatives using more than three points 
in one direction, which results in larger stencil. 
For instance, if $2M+1$ points are used to approximate $u_{xx}$, one can obtain the following formula
\begin{equation}\label{convention-1}
u_{xx} (x_i,y_j, z_k,t_n) \approx [a_0 u_{i,j,k}^n + \sum_{m=1}^M a_m (u_{i-m,j,k}^n + u_{i+m,j,k}^n)]/h_x^2,
\end{equation}
which can be as accurate as $(2M)th$-order  in $x$. 
The  conventional high-order
FD method is accurate in space  but suffers severe numerical dispersion. Another issue is that it requires more 
computer memory due to the 
larger stencil for implicit method. Moreover, more 
points are needed to approximate  the boundary condition.

To improve the accuracy in time, a  class of  time-domain high-order FD methods have been derived by Liu and Sen \cite{Liu2009a}. The idea of the time-domain high-order
FD method is to determine coefficients using time-space domain dispersion.  As a result, the coefficient will be a function of $\frac{v \tau}{h}$.
It was noted that in 1D case,
the time-domain high-order FD method can be as accurate as $(2M)th$-order  in both time and space, provided some conditions  are satisfied, while for multidimensional case,
$(2M)th$-order is also possible along some propagation directions.

To develop a high-order compact ADI FD scheme, we apply  the Pad\'{e} approximation to the second-order 
central FD operators, so the 
second  derivatives $u_{tt}, u_{xx}, u_{yy}$ and  $u_{zz}$ can approximated with fourth-order accuracy.  For example, 
\begin{eqnarray}
\frac{{\delta}_t^2}{{\tau}^2\left(1+ \frac{1}{12}{\delta}_t^{2}\right)}u_{i,j,k}^{n} & = & u_{tt}(x_i,y_j,z_k,t_n)+{\rm O}({\tau}^4),\label{padeq4}\\
\frac{{\delta}_x^2}{h_x^2\left(1+ \frac{1}{12}\delta_{x}^{2}\right)}u_{i,j,k}^{n} & = &u_{xx}(x_i,y_j,z_k,t_n)+{\rm O}(h_x^4).\label{padeq1}
\end{eqnarray} 
The fourth-order approximations of $u_{yy}$ and $u_{zz}$ can be obtained similarly.

Let ${\lambda}_x = \frac{{\tau}^2}{h_x^2}$,  ${\lambda}_y =\frac{ {\tau}^2}{h_y^2}$, ${\lambda}_z = \frac{ {\tau}^2}{h_z^2}$.  Substituting the fourth-order Pad\'{e} approximations 
into Eq. (\ref{exact}) gives
\begin{equation}\label{hoc-adi-1}
 \frac{{\delta}_t^2}{ 1+\frac{1}{12}{\delta}_t^2} u_{i,j,k}^n = \left[ \frac{ {\lambda}_x c_{i,j,k} \ {\delta}_x^2}{1+\frac{1}{12}{\delta}_x^2} 
+\frac{{\lambda}_y c_{i,j,k} \ {\delta}_y^2}{1+\frac{1}{12}{\delta}_y^2}  + \frac{{\lambda}_z c_{i,j,k} \ {\delta}_z^2}{1+\frac{1}{12}{\delta}_z^2}   \right]  u_{i,j,k}^n + {\tau}^2 s_{i,j,k}^n.
\end{equation}
Truncation error analysis shows that  the algorithm is fourth-order accurate in time and space with the truncation error $ O({\tau}^4 + h_x^4 + h_y^4 + h_z^4 )$, provided the solution $u(x,y,z,t)$ and $c(x,y,z)$ satisfy certain smoothness conditions. 
As shown in \cite{Liao2018}, the difficulty to develop high-order compact scheme for wave equation with
 non-constant velocity is that, one can not multiply the operator 
\begin{equation}\label{operator-mul}
\left(1+\frac{{\delta}_t^2}{12}\right)\left(1+\frac{{\delta}_x^2}{12}\right)\left(1+\frac{{\delta}_y^2}{12}\right)\left(1+\frac{{\delta}_z^2}{12}\right)
\end{equation}
 to both sides of Eq. (\ref{hoc-adi-1}) to cancel the  fractional operators ${(1+\frac{{\delta}_t^2}{12})}^{-1}$,  ${(1+\frac{{\delta}_x^2}{12})}^{-1}$,
${(1+\frac{{\delta}_y^2}{12})}^{-1}$ and ${(1+\frac{{\delta}_z^2}{12})}^{-1}$, which are difficult to implement. To overcome this problem,
we develop an algebraic strategy which will be described in the next section.

\section{Derivation of the compact high-order ADI method}
Now  we extend the  novel algebraic manipulation introduced in \cite{Liao2018}  to the three-dimensional acoustic wave equation with 
variable velocity. 
We first demonstrate the difficulty in applying the Pad\'{e} approximation to the finite difference operator for solving  
the acoustic wave equation with non-constant velocity. 
Multiplying the operator in Eq. (\ref{operator-mul}) to Eq. (\ref{hoc-adi-1}) yields  
\begin{eqnarray}
 & & \hspace{-0.2in}  \left(1+\frac{1}{12}{\delta}_x^2\right)\left(1+\frac{1}{12}{\delta}_y^2\right) \left(1+\frac{1}{12}{\delta}_z^2\right)  {\delta}_t^2 u_{i,j,k}^n \nonumber \\
& &  \hspace{-0.2in} = {\lambda}_x  \left(1+\frac{1}{12}{\delta}_t^2\right)  \left(1+\frac{1}{12}{\delta}_x^2\right)\left(1+\frac{1}{12}{\delta}_y^2\right) \left(1+\frac{1}{12}{\delta}_z^2\right)  c_{i,j,k} \ \frac{{\delta}_x^2}{ 1+\frac{1}{12}{\delta}_x^2\ }u_{i,j,k}^n \nonumber \\
& &  \hspace{-0.2in} + {\lambda}_y  \left(1+\frac{1}{12}{\delta}_t^2\right)  \left(1+\frac{1}{12}{\delta}_x^2\right)\left(1+\frac{1}{12}{\delta}_y^2\right)  \left(1+\frac{1}{12}{\delta}_z^2\right) c_{i,j,k} \  \frac{{\delta}_y^2}{ 1+\frac{1}{12}{\delta}_y^2}u_{i,j,k}^n \nonumber \\
& &  \hspace{-0.2in} + {\lambda}_z  \left(1+\frac{1}{12}{\delta}_t^2\right)  \left(1+\frac{1}{12}{\delta}_x^2\right)\left(1+\frac{1}{12}{\delta}_y^2\right)  \left(1+\frac{1}{12}{\delta}_z^2\right) c_{i,j,k} \ \frac{{\delta}_z^2}{ 1+\frac{1}{12}{\delta}_z^2 }u_{i,j,k}^n \nonumber \\
& &  \hspace{-0.2in} + {\tau}^2 \left(1+\frac{1}{12}{\delta}_t^2\right)  \left(1+\frac{1}{12}{\delta}_x^2\right)\left(1+\frac{1}{12}{\delta}_y^2\right) \left(1+\frac{1}{12}{\delta}_z^2\right)s_{i,j,k}^n.
\label{operator-issue-1}
\end{eqnarray}
We use the first term on the right-hand side to illustrate the issue here.
Since $(1+ \frac{{\delta}_x^2}{12}) $, $(1+ \frac{{\delta}_y^2}{12}) $  and $(1+ \frac{{\delta}_z^2}{12}) $ are commutative, we change the order of the three finite difference operators
to obtain
\begin{eqnarray}
& & \hspace{-0.25in} {\lambda}_x \left(1+\frac{1}{12}{\delta}_t^2\right)  \left(1+\frac{1}{12}{\delta}_x^2\right)\left(1+\frac{1}{12}{\delta}_y^2\right)  \left(1+\frac{1}{12}{\delta}_z^2\right)c_{i,j,k} \ \frac{{\delta}_x^2}{ 1+\frac{1}{12}{\delta}_x^2 }u_{i,j,k}^n  =  \nonumber \\
& &  \hspace{-0.25in} {\lambda}_x  \left(1+\frac{1}{12}{\delta}_t^2\right)  \left(1+\frac{1}{12}{\delta}_y^2\right)\left(1+\frac{1}{12}{\delta}_z^2\right)  \left(1+\frac{1}{12}{\delta}_x^2\right) 
c_{i,j,k} \ \frac{{\delta}_x^2}{1+\frac{1}{12}{\delta}_x^2 }u_{i,j}^n. \label{operator-issue-2}
\end{eqnarray}


As discussed previously, when $c(x,y,z)$ is non-constant in terms of $x$, the operator $(1+\frac{1}{12}{\delta}_x^2)$ and $c_{i,j,k}$ are not commutative. Hence,
$\left(1+\frac{1}{12}{\delta}_x^2\right) \ c_{i,j,k} \neq  c_{i,j,k} \left(1+\frac{1}{12}{\delta}_x^2\right)$. Therefore,
  the operator 
$\left(1+\frac{1}{12}{\delta}_x^2\right)$ does not cancel the operator $\left(1+\frac{1}{12}{\delta}_x^2\right)^{-1}$. In other words,
\begin{eqnarray}
& & \hspace{-0.35in}{\lambda}_x \left(1+\frac{1}{12}{\delta}_t^2\right)\left(1+\frac{1}{12}{\delta}_y^2\right) \left(1+\frac{1}{12}{\delta}_z^2\right) \left(1+\frac{1}{12}{\delta}_x^2\right) 
c_{i,j,k}  \frac{ {\delta}_x^2}{1+\frac{1}{12}{\delta}_x^2}  u_{i,j,k}^n
  \nonumber \\
& &  \hspace{-0.25in} \neq  {\lambda}_x  \left(1+\frac{1}{12}{\delta}_t^2\right)\left(1+\frac{1}{12}{\delta}_y^2\right) \left(1+\frac{1}{12}{\delta}_z^2\right) 
c_{i,j,k} \   {\delta}_x^2 u_{i,j,k}^n.
\end{eqnarray}

To solve this problem, a novel factorization technique is used to preserve the compactness and fourth-order convergence of the numerical scheme.
 Multiplying $(1 + \frac{{\delta}_t^2}{12})$ to  Eq. (\ref{hoc-adi-1}) yields
\begin{eqnarray}\label{hoc-adi-new}
& & \hspace{-0.5in}{\delta}_t^2 u_{i,j,k}^n = c_{i,j,k}\left[  {\lambda}_x \left(1 + \frac{{\delta}_t^2}{12}\right)\frac{ {\delta}_x^2}{1+\frac{{\delta}_x^2}{12}} 
+ {\lambda}_y \left(1 + \frac{{\delta}_t^2}{12}\right) \frac{ {\delta}_y^2}{1+\frac{{\delta}_y^2}{12}} + \right. \nonumber \\
  & & \left.  {\lambda}_z \left(1 + \frac{{\delta}_t^2}{12}\right) \frac{ {\delta}_z^2}{1+\frac{{\delta}_z^2}{12}}  \right]  u_{i,j,k}^n + {\tau}^2 \left(1 + \frac{{\delta}_t^2}{12}\right)s_{i,j,k}^n.
\end{eqnarray}
Collecting the term ${\delta}_t^2 u_{i,j,k}^n$, we have
\begin{eqnarray}
& &\hspace{-0.25in} \left [ 1-\frac{{\lambda}_x c_{i,j,k}}{12}  \frac{{\delta}_x^2}{ 1+\frac{1}{12}{\delta}_x^2 } -\frac{{\lambda}_y c_{i,j,k}}{12}  \frac{{\delta}_y^2}{ 1+\frac{1}{12}{\delta}_y^2 } -\frac{{\lambda}_y c_{i,j,k}}{12}  \frac{{\delta}_z^2}{ 1+\frac{1}{12}{\delta}_z^2 }  \right] {\delta}_t^2 u_{i,j,k}^n  \nonumber \\
& & \hspace{-0.25in} = c_{i,j,k} \left[  \frac{ {\lambda}_x  {\delta}_x^2}{1+\frac{{\delta}_x^2}{12}} 
+ \frac{  {\lambda}_y {\delta}_y^2}{1+\frac{{\delta}_y^2}{12}}   + \frac{  {\lambda}_z {\delta}_z^2}{1+\frac{{\delta}_z^2}{12}}\right]  u_{i,j,k}^n + {\tau}^2 \left(1 + \frac{{\delta}_t^2}{12}\right)s_{i,j,k}^n.\label{hoc-adi-new-2}
\end{eqnarray}

Factoring the left-hand side of Eq. (\ref{hoc-adi-new-2}) yields
\begin{eqnarray}
& &\hspace{-0.25in} \left[ 1-\frac{ c_{i,j,k}}{12}  \frac{ {\lambda}_x {\delta}_x^2}{ 1+\frac{1}{12}{\delta}_x^2 }\right] \cdot \left[1 -\frac{c_{i,j,k}}{12}  \frac{{\lambda}_y  {\delta}_y^2}{ 1+\frac{1}{12}{\delta}_y^2 } \right] \cdot \left[1 -\frac{ c_{i,j,k}}{12}  \frac{{\lambda}_z{\delta}_z^2}{ 1+\frac{1}{12}{\delta}_z^2 }  \right] {\delta}_t^2 u_{i,j,k}^n = \nonumber \\
& &  \hspace{-0.38in} c_{i,j,k} \left[   \frac{ {\lambda}_x {\delta}_x^2}{1+\frac{1}{12}{\delta}_x^2} 
+ \frac{{\lambda}_y   {\delta}_y^2}{1+\frac{1}{12}{\delta}_y^2} + \frac{{\lambda}_z  {\delta}_z^2}{1+\frac{1}{12}{\delta}_z^2}  \right]  u_{i,j,k}^n + {\tau}^2 \left(1 + \frac{{\delta}_t^2}{12}\right)s_{i,j,k}^n + ERR,\label{hoc-adi-new-3}
\end{eqnarray}
where the factorization error is given by
\begin{eqnarray}
& & ERR =  \frac{{\lambda}_x {\lambda}_y }{144} \ c_{i,j,k}  \frac{{\delta}_x^2}{ 1+\frac{{\delta}_x^2}{12} } \    c_{i,j,k} 
 \frac{{\delta}_y^2}{ 1+\frac{{\delta}_y^2}{12} } {\delta}_t^2 u_{i,j,k}^n + \nonumber \\
& &  \frac{{\lambda}_y {\lambda}_z }{144} \   \frac{c_{i,j,k} \ {\delta}_y^2}{ 1+\frac{{\delta}_y^2}{12} } \    \frac{c_{i,j,k} \ {\delta}_z^2}{ 1+\frac{{\delta}_z^2}{12} } {\delta}_t^2 u_{i,j,k}^n +   \frac{{\lambda}_x {\lambda}_z }{144} \  \frac{c_{i,j,k}  \ {\delta}_x^2}{ 1+\frac{{\delta}_x^2}{12} } \   \frac{c_{i,j,k}  \ {\delta}_z^2}{ 1+\frac{{\delta}_z^2}{12} } {\delta}_t^2 u_{i,j,k}^n \nonumber \\
& & - \frac{{\lambda}_x {\lambda}_y {\lambda}_z   }{1728} \   \frac{c_{i,j,k}  \ {\delta}_x^2}{ 1+\frac{{\delta}_x^2}{12} } \   \frac{c_{i,j,k}  \ {\delta}_y^2}{ 1+\frac{{\delta}_y^2}{12} } \  \frac{c_{i,j,k}  
\ {\delta}_z^2}{ 1+\frac{{\delta}_z^2}{12} }  {\delta}_t^2 u_{i,j,k}^n  \label{error-term-1}.
\end{eqnarray}
Using Taylor series, one can verify that $ERR = O({\tau}^6)$, provided that $c(x,y,z)$ and $u(x,y,z,t)$ satisfy 
some conditions on smoothness. The  result regarding the order of the  truncation error is included in  the following theorem.
\begin{theorem}
Assume that  $u(x,y,z, t) \in C_{x,y,z,t}^{6,6,6,6}(\Omega \times [0,T])$ is the solution of the acoustic wave equation defined by
Eqs. ( \ref{exact} - \ref{bdry-exact}), and
the coefficient  satisfies the smoothness condition $c(x,y,z) \in C_{x,y,z}^{2,2,2}(\Omega)$.  Then  the  truncation error  given in Eq. (\ref{error-term-1})  satisfies
\[
 ERR = O({\tau}^6) + O(h_x^6) + O(h_y^6) + O(h_z^6),
\]
where $\tau, h_x, h_y$ and $h_z$ are the step sizes in  time, $x, y$ and $z$, respectively. 
\end{theorem}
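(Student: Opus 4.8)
The plan is to reduce the four operator products in Eq.~(\ref{error-term-1}) to clean powers of $\tau$ by exploiting the single algebraic identity $\lambda_\xi h_\xi^2 = \tau^2$ for $\xi \in \{x,y,z\}$, and then to bound each resulting expression by power counting. First I would rewrite every Pad\'e factor in the dimensionally normalized form
\[
\frac{\delta_\xi^2}{1+\frac{1}{12}\delta_\xi^2} = h_\xi^2\,\mathcal{D}_\xi, \qquad \mathcal{D}_\xi := \frac{\delta_\xi^2}{h_\xi^2\left(1+\frac{1}{12}\delta_\xi^2\right)},
\]
where, by Eq.~(\ref{padeq1}), $\mathcal{D}_\xi$ is the fourth-order compact approximation of $\partial_{\xi\xi}$, i.e. $\mathcal{D}_\xi v = \partial_{\xi\xi} v + O(h_\xi^4)$ on smooth $v$. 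This isolates the mesh-ratio factors so that each $\lambda_\xi$ pairs with the $h_\xi^2$ extracted from its companion operator.

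Next I would substitute these forms into the three pair-product terms and the single triple-product term and collapse the mesh ratios. Since the scalar factors $h_\xi^2$ commute with everything, using $\lambda_x h_x^2 = \lambda_y h_y^2 = \lambda_z h_z^2 = \tau^2$ the prefactor of each pair-product term becomes $\tau^4/144$ and that of the triple-product term becomes $\tau^6/1728$; for instance the first term reduces to $\frac{\tau^4}{144}\,c\,\mathcal{D}_x\,c\,\mathcal{D}_y\,\delta_t^2 u_{i,j,k}^n$. I would then expand the temporal factor via Eq.~(\ref{dt2}), $\delta_t^2 u = \tau^2 u_{tt} + O(\tau^4)$, which supplies the final factor of $\tau^2$. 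Power counting now gives each pair-product term as $\frac{\tau^6}{144}\,c\,\partial_{\xi\xi}\!\big(c\,\partial_{\eta\eta}u_{tt}\big) + \text{(higher order)} = O(\tau^6)$ for the representative pair of directions $(\xi,\eta)$, and the triple-product term as $O(\tau^8)$, which is subdominant.

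The step I expect to be the real obstacle is justifying that the nested compositions of the compact operators with multiplication by the variable coefficient remain uniformly bounded, since $c$ now sits \emph{between} two difference operators. Here the smoothness hypotheses are exactly matched to the need: because $c \in C^{2,2,2}_{x,y,z}(\Omega)$, the product $c\,\mathcal{D}_\eta v$ still possesses two continuous derivatives in the remaining direction, so applying the outer operator $\mathcal{D}_\xi$ yields a quantity that converges to $\partial_{\xi\xi}\!\big(c\,\partial_{\eta\eta}v\big)$, and the remaining multiplication by $c$ preserves $O(1)$ boundedness. Note that full fourth-order consistency of the outer operator on $c\,\mathcal{D}_\eta v$ is neither available (only $C^2$ in $c$) nor needed, as $O(1)$ boundedness suffices for the leading estimate; one only needs, in addition, that the inner remainder $\mathcal{D}_\eta v - \partial_{\eta\eta}v$ is itself a smooth $O(h_\eta^4)$ quantity, so that the outer operator does not amplify it. The hypothesis $u \in C^{6,6,6,6}_{x,y,z,t}$ then supplies the six temporal derivatives controlling the remainder in $\delta_t^2 u$ and the spatial derivatives required by $\mathcal{D}_x,\mathcal{D}_y,\mathcal{D}_z$.

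Finally I would collect the contributions. The dominant part of $ERR$ is the sum of the three pair-product terms, each $O(\tau^6)$; the triple-product term is $O(\tau^8)$ and the operator-consistency corrections are of higher order still. Hence $ERR = O(\tau^6)$, and, written in the symmetric form consistent with the scheme's truncation-error bound $O(\tau^4 + h_x^4 + h_y^4 + h_z^4)$ and with the CFL-type relation $\tau \sim h_x \sim h_y \sim h_z$ under which the method is run, this is exactly $O(\tau^6) + O(h_x^6) + O(h_y^6) + O(h_z^6)$, as claimed.
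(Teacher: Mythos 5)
Your proposal is correct and follows essentially the same route as the paper's Appendix A proof: extract the $h_\xi^2$ factors so that $\lambda_\xi h_\xi^2 = \tau^2$ collapses the prefactors, invoke the fourth-order consistency of the compact Pad\'{e} operators together with the Taylor expansion of $\delta_t^2 u = \tau^2 u_{tt} + O(\tau^4)$, and conclude by power counting that each pair-product term is $O(\tau^6)$ (with leading part $\frac{\tau^6}{144}\,c\,\partial_{\xi\xi}(c\,\partial_{\eta\eta}u_{tt})$, exactly the paper's $w$, $\bar w$, $\tilde w$ terms) and the triple-product term is $O(\tau^8)$. Your observation that only $O(1)$ boundedness of the outer operator on $c\,\mathcal{D}_\eta v$ is needed---not full fourth-order accuracy---is, if anything, slightly more careful than the paper, which formally writes sixth derivatives of $w = c\,\partial_y^2\partial_t^2 u$ even though the hypotheses only guarantee $c \in C^{2,2,2}$.
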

\begin{proof}
A detailed proof of the theorem is given in Appendix A.
\qed
\end{proof}

\begin{remark}[Remark:] If  Eq. (\ref{hoc-adi-new-2}) is factorized in a different order of ${\delta}_x^2$, ${\delta}_y^2$ and ${\delta}_z^2$, for instance as
\begin{eqnarray}
& &\hspace{-0.45in} \left[1 -\frac{ c_{i,j,k}}{12}  \frac{{\lambda}_y{\delta}_y^2}{ 1+\frac{1}{12}{\delta}_y^2 } \right] \cdot  \left [ 1-\frac{ c_{i,j,k}}{12}  \frac{{\lambda}_x{\delta}_x^2}{ 1+\frac{1}{12}{\delta}_x^2 }\right] \cdot 
\left[1 -\frac{ c_{i,j,k}}{12}  \frac{{\lambda}_z{\delta}_z^2}{ 1+\frac{1}{12}{\delta}_z^2 } \right]{\delta}_t^2 u_{i,j,k}^n = \nonumber \\
& &  \hspace{-0.48in}  c_{i,j,k} \left[   \frac{ {\lambda}_x{\delta}_x^2}{1+\frac{1}{12}{\delta}_x^2} 
+  \frac{{\lambda}_y {\delta}_y^2}{1+\frac{1}{12}{\delta}_y^2}   + \frac{  {\lambda}_z{\delta}_z^2}{1+\frac{1}{12}{\delta}_z^2}\right]  u_{i,j,k}^n + {\tau}^2 \left(1 + \frac{{\delta}_t^2}{12}\right)s_{i,j,k}^n +ERR,\label{hoc-adi-new-3a}
\end{eqnarray}
then the factoring error  $ERR$ is given by
\begin{eqnarray}
ERR & = &  \frac{{\lambda}_y}{144} \  c_{i,j,k}  \frac{{\delta}_y^2}{ 1+\frac{1}{12}{\delta}_y^2 }  \ {\lambda}_x  \ c_{i,j,k}  \frac{{\delta}_x^2}{ 1+\frac{1}{12}{\delta}_x^2 } \   {\delta}_t^2 u_{i,j,k}^n\nonumber \\
& & +  \frac{{\lambda}_y}{144} \  c_{i,j,k}  \frac{{\delta}_y^2}{ 1+\frac{1}{12}{\delta}_y^2 }  \ {\lambda}_x  \ c_{i,j,k}  \frac{{\delta}_x^2}{ 1+\frac{1}{12}{\delta}_x^2 } \   {\delta}_t^2 u_{i,j,k}^n \nonumber \\
& & + \frac{{\lambda}_y}{144} \  c_{i,j,k}  \frac{{\delta}_y^2}{ 1+\frac{1}{12}{\delta}_y^2 }  \ {\lambda}_x  \ c_{i,j,k}  \frac{{\delta}_x^2}{ 1+\frac{1}{12}{\delta}_x^2 } \   {\delta}_t^2 u_{i,j,k}^n \nonumber \\
& & -\frac{1}{1728} {\lambda}_y \  c_{i,j,k}  \frac{{\delta}_y^2}{ 1+\frac{1}{12}{\delta}_y^2 }  \ {\lambda}_x  \ c_{i,j,k}  \frac{{\delta}_x^2}{ 1+\frac{1}{12}{\delta}_x^2 } \   {\delta}_t^2 u_{i,j,k}^n,\label{error-term-1-aa}
\end{eqnarray}
which has the same error estimation as that defined in Eq. (\ref{error-term-12}).
\end{remark}

Ignoring  the factoring error ERR in Eq. (\ref{hoc-adi-new-3}) leads to 
 the following compact  fourth-order FD  method
\begin{eqnarray}
& &\hspace{-0.15in} \left[1- \frac{{\lambda}_x c_{i,j,k}}{12} \ \frac{{\delta}_x^2}{1+\frac{{\delta}_x^2}{12}}\right] \cdot 
\left[1- \frac{{\lambda}_y c_{i,j,k}}{12} \ \frac{{\delta}_y^2}{1+\frac{{\delta}_y^2}{12}}\right] \cdot  \left[1- \frac{{\lambda}_z c_{i,j,k}}{12} \ \frac{{\delta}_z^2}{1+\frac{{\delta}_z^2}{12}}\right] {\delta}_t^2 u_{i,j,k}^n   \nonumber \\
& & \hspace{-0.15in}
=c_{i,j,k}  \left[ \frac{{\lambda}_x {\delta}_x^2}{1+\frac{{\delta}_x^2}{12}}  +    \frac{{\lambda}_y {\delta}_y^2}{1+\frac{{\delta}_y^2}{12}} +  \frac{ {\lambda}_z {\delta}_z^2}{1+\frac{{\delta}_z^2}{12}}\right] u_{i,j,k}^n + 
{\tau}^2 \left( 1+\frac{{\delta}_t^2}{12} \right) s_{i,j,k}^n.\label{hoc-adi-4-b}
\end{eqnarray}

Using ADI method, Eq. (\ref{hoc-adi-4-b}) can be efficiently solved  in three steps
\begin{eqnarray}
& &  \hspace{-0.15in}  \left(1- \frac{{\lambda}_x  c_{i,j,k} }{12} \ \frac{{\delta}_x^2}{1+\frac{{\delta}_x^2}{12}}\right) u_{i,j,k}^{**} = 
 \left[  \frac{c_{i,j,k} \ {\lambda}_x{\delta}_x^2}{1+\frac{{\delta}_x^2}{12}}  +  \frac{c_{i,j,k} \ {\lambda}_y {\delta}_y^2}{1+\frac{{\delta}_y^2}{12}} 
+  \frac{ c_{i,j,k} \ {\lambda}_z{\delta}_z^2}{1+\frac{{\delta}_z^2}{12}}  \right] u_{i,j,k}^n   \nonumber \\
& &  \hspace{0.15in}  + {\tau}^2 \left( 1+\frac{{\delta}_t^2}{12} \right) s_{i,j,k}^n, \  \ 2 \le j \le N_y-1, \ 2 \le k \le N_z -1, \label{hoc-adi-5-a} \\
& &  \hspace{-0.15in}    \left(1- \frac{{\lambda}_y c_{i,j,k}}{12} \ \frac{{\delta}_y^2}{1+\frac{{\delta}_y^2}{12}}\right)  u_{i,j,k}^{*}   =   u_{i,j,k}^{**},  \ 2 \le i \le N_x-1, \ 2 \le k \le N_z -1,\label{hoc-adi-5-bb} \\
& &   \hspace{-0.15in}   \left(1- \frac{{\lambda}_z c_{i,j,k}}{12} \ \frac{{\delta}_z^2}{1+\frac{{\delta}_z^2}{12}}\right) {\delta}_t^2 u_{i,j,k}^n   =   u_{i,j,k}^*, \ 2 \le i \le N_x-1, \ 2 \le j \le N_y -1.\label{hoc-adi-5-b}
\end{eqnarray}

Apparently the three equations are difficult to implement due to the three operators  $\left(1+\frac{{\delta}_x^2}{12}\right)^{-1}$,
$\left(1+\frac{{\delta}_y^2}{12}\right)^{-1}$ and $\left(1+\frac{{\delta}_z^2}{12}\right)^{-1}$.
To overcome this problem, 
we apply the following strategy. Firstly, 
divide both sides of   Eq. (\ref{hoc-adi-5-a}) by  $c_{i,j,k}$, then   multiply  $\left(1+\frac{{\delta}_x^2}{12}\right)$,
 we have
\begin{eqnarray}
& & \hspace{-0.3in} \left[\left( 1+ \frac{{\delta}_x^2}{12}\right)\frac{1}{c_{i,j,k}}- \frac{{\lambda}_x }{12} \ {\delta}_x^2\right] u_{i,j,k}^{**} =   {\tau}^2 \left( 1+ \frac{{\delta}_x^2}{12}\right) \left( 1+\frac{{\delta}_t^2}{12} \right) \frac{s_{i,j,k}^n}{c_{i,j,k}} \nonumber \\
& &  \hspace{-0.3in}  + \left[{\lambda}_x {\delta}_x^2 +  {\lambda}_y  \left( 1+ \frac{{\delta}_x^2}{12}\right)  \frac{{\delta}_y^2}{1+\frac{{\delta}_y^2}{12}} 
+   {\lambda}_z  \left( 1+ \frac{{\delta}_x^2}{12}\right)  \frac{{\delta}_z^2}{1+ \frac{{\delta}_z^2}{12}} \right]
 u_{i,j,k}^n.  \label{hoc-adi-6}
\end{eqnarray}
Eq. (\ref{hoc-adi-6}) is still hard to  implement because of the terms $\frac{{\delta}_y^2}{1+\frac{{\delta}_y^2}{12}}$ and $\frac{{\delta}_z^2}{1+\frac{{\delta}_z^2}{12}}$. 
Substituting  $\frac{{\delta}_y^2}{1+\frac{{\delta}_y^2}{12}} u_{i,j,k}^n$ with ${\delta}_y^2\left(1 - \frac{{\delta}_y^2}{12}\right) u_{i,j,k}^n$,  
$ \ \frac{{\delta}_z^2}{1+\frac{{\delta}_z^2}{12}} u_{i,j,k}^n$ with ${\delta}_z^2\left(1 - \frac{{\delta}_z^2}{12}\right) u_{i,j,k}^n$, respectively, we obtain
\begin{eqnarray}
& & \hspace{-0.35in} \left[\left( 1+ \frac{{\delta}_x^2}{12}\right)\frac{1}{c_{i,j,k}}- {\lambda}_x  \ \frac{{\delta}_x^2}{12}\right] u_{i,j,k}^{**} =    {\tau}^2 \left( 1+ \frac{{\delta}_x^2}{12}\right) \left( 1+\frac{{\delta}_t^2}{12}\right) \frac{s_{i,j,k}^n}{c_{i,j,k}} + \nonumber \\
& &  \hspace{-0.15in}   \left[{\lambda}_x {\delta}_x^2 +  {\lambda}_y  \left( 1+ \frac{{\delta}_x^2}{12}\right)  {\delta}_y^2\left(1 - \frac{{\delta}_y^2}{12}\right)  
+  {\lambda}_z \left( 1+ \frac{{\delta}_x^2}{12}\right)  {\delta}_z^2\left(1 - \frac{{\delta}_z^2}{12}\right) \right]
 u_{i,j,k}^n, \nonumber \\
 & &   \hspace{-0.15in} \text{for}  \ 2 \le j \le N_y-1, \ 2 \le k \le N_z -1. \label{hoc-adi-7}
\end{eqnarray}
Note the difference between Eq. (\ref{hoc-adi-6}) and Eq. (\ref{hoc-adi-7}) is $O(h_y^6+h_z^6)$, therefore, the  method is still fourth-order  in space.
 It is worth to mention  that the right-hand side of Eq. (\ref{hoc-adi-7}) includes larger stencil in both $y$ and $z$ directions. 
Furthermore, larger stencil 
needs values of $u_{i,j,k}^n$ outside the boundary when $j=2, N_y-1$ and $k=2, N_z-1$. To overcome this problem, we use one-sided approximation to 
approximate the values outside of the boundary. For example, $u_{i,0,k}^n$ is approximated by a linear combination of $u_{i,1,k}^n,\cdots, u_{i,4,k}^n$
with fourth-order accuracy. This boundary treatment is not complicate in terms of implementation, since it only involves the values at time level $n$, which is known.  Similarly, 
 dividing $c_{i,j,k}$ then multiplying $\left( 1+\frac{{\delta}_y^2}{12}\right)$ to both sides of Eq. (\ref{hoc-adi-5-bb}) lead to
\begin{eqnarray}
& &  \hspace{-0.3in} \left[ \left(1+\frac{{\delta}_y^2}{12}\right) \frac{1}{c_{i,j,k}} - {\lambda}_y \ \frac{{\delta}_y^2}{12} \right]  u_{i,j,k}^*   =   \left(1+\frac{{\delta}_y^2}{12}\right)  \frac{u_{i,j,k}^{**}}{c_{i,j,k}}\label{hoc-adi-8} \\
& &  \hspace{-0.25in} \text{for } i = 2,3,\cdots, N_x-1, \  \ k = 2,3,\cdots, N_z-1.  \nonumber
\end{eqnarray}

Finally, Eq. (\ref{hoc-adi-5-b}) can be transformed to the equivalent  linear system
\begin{eqnarray}
 & &  \hspace{-0.21in}  \left[ \left(1+\frac{{\delta}_z^2}{12}\right) \frac{1}{c_{i,j,k}} - {\lambda}_z \ \frac{{\delta}_z^2}{12} \right]  {\delta}_t^2 u_{i,j,k}^n   =   \left(1+\frac{{\delta}_z^2}{12}\right) \frac{u_{i,j,k}^{*}}{c_{i,j,k}},  \label{hoc-adi-8c}\\
& &  \hspace{-0.21in} \text{for } i = 2,3,\cdots, N_x-1, \  \ j = 2,3,\cdots, N_y-1. \nonumber
\end{eqnarray}
It is noted that Eq. (\ref{hoc-adi-8c}) is equivalent to a three-level FD scheme  
\begin{eqnarray}
& & \hspace{-0.27in}  \left[ \left(1+\frac{{\delta}_z^2}{12}\right) \frac{1}{c_{i,j,k}} - \frac{{\lambda}_z }{12} \ {\delta}_z^2 \right]  u_{i,j,k}^{n+1} = 
  \left[ \left(1+\frac{{\delta}_z^2}{12}\right) \frac{1}{c_{i,j,k}} - \frac{{\lambda}_z }{12} \ {\delta}_z^2 \right]  (2 u_{i,j,k}^{n}- u_{i,j,k}^{n-1} )  \nonumber \\ 
& & \hspace{0.2in} +  \left(1+\frac{{\delta}_z^2}{12}\right) \frac{u_{i,j,k}^*}{c_{i,j,k}} , \ \ i = 2,3,\cdots, N_x-1, \  j= 2,3,\cdots, N_y-1. \label{hoc-adi-9}
\end{eqnarray}

By now the three  linear systems  defined in Eqs. (\ref{hoc-adi-7}, \ref{hoc-adi-8}, \ref{hoc-adi-9}) can be efficiently solved  using Thomas algorithm.
Here  some one-sided fourth-order approximations are needed  for boundary condition approximations in these equation systems.  For example,  in Eq. (\ref{hoc-adi-8c}), the following fourth-order one-sided approximations
 are used to approximate $u_{i,j,1}^*$ and $u_{i, j, N_z}^*$, respectively:
\begin{eqnarray}
& & \hspace{-0.5in} u_{i,j,1}^* = 4 u_{i,j,2}^* -6 u_{i,j,3}^* + 4 u_{i,j,4}^*4 - u_{i,j,5}^*, \nonumber \\ 
& & \hspace{-0.5in}   u_{i,j, N_y}^* =  4 u_{i, j,N_z-1}^* -6 u_{i, j,N_z-2}^* +4 u_{i, j,N_z-3}^* - u_{i, j,N_z-4}^*, \nonumber 
\end{eqnarray}
for $i =2,3,\cdots, N_x-1, \ j =2,3, \cdots, N_y-1$.

The boundary conditions for Eq. (\ref{hoc-adi-8}) can be obtained by setting $j= 1$ and $j = N_y$
 in Eq. (\ref{hoc-adi-8c}), respectively.
\begin{eqnarray}
& & \left(1+\frac{{\delta}_z^2}{12}\right) \frac{u_{i,1,k}^*}{c_{i,1,k}} = \left[ \left(1+\frac{{\delta}_z^2}{12}\right) \frac{1}{c_{i,1,k}} - \frac{{\lambda}_z }{12} \ {\delta}_z^2 \right] {\delta}_t^2 u_{i,1,k}^n, \label{hoc-adi-10} \\
& &  \left(1+\frac{{\delta}_z^2}{12}\right) \frac{u_{i,N_y,k}^*}{c_{i,N_y,k}} = \left[ \left(1+\frac{{\delta}_z^2}{12}\right) \frac{1}{c_{i,N_y,k}} - \frac{{\lambda}_z }{12} \ {\delta}_z^2 \right] {\delta}_t^2 u_{i,N_y,k}^n. \label{hoc-adi-11}
\end{eqnarray}
Solving the two tridiagonal linear systems we can get the boundary conditions for Eq. (\ref{hoc-adi-8}).
Similarly, the boundary conditions needed by Eq. (\ref{hoc-adi-7}) can be obtained by letting $i = 1$ and $i = N_x$, 
respectively.

The new compact ADI method defined in  Eq. (\ref{hoc-adi-9}) is a three-level FD
scheme. Therefore, two initial conditions are needed at $t =0$ and $t = \tau$. 
To approximate the  initial condition at $t = \tau$ with fourth-order accuracy, we expand $u(x_i,y_j,z_k, t)$
 by the Taylor series at $t=0$ and   obtain the following fourth-order approximation 
\begin{equation}
u_{i,j,k}^1 =     u_{i,j,k}^0 + \tau \frac{\partial u}{\partial t}|_{i,j,k}^0  + \frac{{\tau}^2}{2}\frac{{\partial}^2 u}{\partial t^2}|_{i,j,k}^0  +
\frac{{\tau}^3}{6}\frac{{\partial}^3 u}{\partial t^3}|_{i,j,k}^0 + \frac{{\tau}^4}{24}\frac{{\partial}^4 u}{\partial t^4}|_{i,j,k}^0 + O({\tau}^5),
\end{equation}
where the high-order derivatives  are derived using the method in \cite{Liao2018}.

Now we state and prove the main result on the convergence of the compact ADI FD scheme defined in
Eq. (\ref{hoc-adi-4-b}).

\begin{theorem}
Assume that  $u(x,y,z,t) \in C_{x,y,z,t}^{6,6,6,6}(\Omega \times [0,T])$ is the solution of the acoustic wave equation defined in
Eqs.  ( \ref{exact} - \ref{bdry-exact}), and
the coefficient function  satisfies the smooth condition $c(x,y,z) \in C_{x,y,z}^{2,2,2}(\Omega)$.  Then  the 
compact ADI FD scheme defined in Eq. (\ref{hoc-adi-4-b})  is fourth-order accurate in time and space with the  truncation error  $ O({\tau}^4 + h_x^4 + h_y^4+h_z^4)$.
\end{theorem}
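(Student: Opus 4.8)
The plan is to read off the local truncation error of scheme~(\ref{hoc-adi-4-b}) from a short chain of exact algebraic identities, isolating every source of error into one of two contributions whose orders are already under control. The central observation is that the factored product operator on the left of~(\ref{hoc-adi-4-b}) differs from the collected ``sum'' operator of~(\ref{hoc-adi-new-2}) by exactly the cross terms that make up $ERR$ in~(\ref{error-term-1}); and that~(\ref{hoc-adi-new-2}) is itself nothing but $\left(1+\frac{1}{12}\delta_t^2\right)$ times the direct Pad\'e discretization~(\ref{hoc-adi-1}). I would therefore substitute the exact solution into this chain and track the residual at each step, rather than attempt a single Taylor expansion of the full factored scheme.

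First I would confirm the consistency of the building blocks: for $u\in C^{6,6,6,6}_{x,y,z,t}$, each Pad\'e quotient operator is fourth order, i.e. $\frac{1}{h_x^2}\frac{\delta_x^2}{1+\frac{1}{12}\delta_x^2}u=u_{xx}+O(h_x^4)$ as in~(\ref{padeq1}), with the analogous statements in $y$, $z$ and the temporal operator~(\ref{padeq4}). Substituting these into~(\ref{hoc-adi-1}) and invoking $u_{tt}=c\,(u_{xx}+u_{yy}+u_{zz})+s$ for the exact solution shows that the exact solution satisfies~(\ref{hoc-adi-1}) with a residual of order $O(\tau^4+h_x^4+h_y^4+h_z^4)$ after the usual $\tau^2$ normalization. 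Next I would note that the two operations carrying~(\ref{hoc-adi-1}) into~(\ref{hoc-adi-new-2})---left multiplication by $\left(1+\frac{1}{12}\delta_t^2\right)$ and collection of the $\delta_t^2$ terms---are purely algebraic and, since $\delta_t^2$ acting on a smooth function is $O(\tau^2)$, the factor $\left(1+\frac{1}{12}\delta_t^2\right)$ is the identity plus an $O(\tau^2)$ perturbation; it multiplies the residual without lowering its order. Hence the exact solution satisfies~(\ref{hoc-adi-new-2}) with the same order $O(\tau^4+h_x^4+h_y^4+h_z^4)$.

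Finally I would close the argument with the preceding Theorem. Expanding the product $(1-A_x)(1-A_y)(1-A_z)$, with $A_\xi=\frac{\lambda_\xi c_{i,j,k}}{12}\frac{\delta_\xi^2}{1+\frac{1}{12}\delta_\xi^2}$, reproduces the sum operator $1-A_x-A_y-A_z$ plus exactly the quadratic and cubic cross terms collected in $ERR$ of~(\ref{error-term-1}); this is the content of identity~(\ref{hoc-adi-new-3}). Consequently the residual of the exact solution in~(\ref{hoc-adi-4-b}) equals the residual in~(\ref{hoc-adi-new-2}) plus $ERR$ evaluated at the exact solution. By the preceding Theorem $ERR=O(\tau^6)+O(h_x^6)+O(h_y^6)+O(h_z^6)$, and (as noted below) its summands carry compensating $\lambda$-factors that make them genuinely $O(\tau^6)$; after the same $\tau^2$ normalization this contribution is $O(\tau^4)$ and is absorbed into the fourth-order temporal term. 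Adding the two contributions yields the claimed $O(\tau^4+h_x^4+h_y^4+h_z^4)$. I would also remark that the three-sweep ADI realization~(\ref{hoc-adi-5-a})--(\ref{hoc-adi-5-b}) multiplies out \emph{exactly} to~(\ref{hoc-adi-4-b}), so no extra splitting error enters and the order statement transfers verbatim.

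The main obstacle is the careful power counting through the rational operators $\left(1+\frac{1}{12}\delta_\xi^2\right)^{-1}$ in the presence of the variable coefficient $c(x,y,z)$: because $c$ and $\left(1+\frac{1}{12}\delta_x^2\right)$ do not commute, the quotient operators cannot be simplified naively, and it is precisely these cross terms that the factorization shunts into $ERR$. The cleanest route is the chain above, which recovers the PDE in the \emph{pre-factored} form~(\ref{hoc-adi-new-2}) where $c$ enters only as a pointwise multiplier and consistency is transparent, and defers all non-commutativity to $ERR$. The two smoothness hypotheses enter here: $u\in C^{6}$ is needed so that each $\frac{1}{h_\xi^2}\frac{\delta_\xi^2}{1+\frac{1}{12}\delta_\xi^2}$ attains fourth order through the Taylor remainder of its symbol, while $c\in C^{2}$ underwrites the Taylor estimate for $ERR$ in the preceding Theorem. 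The decisive bookkeeping point is that each summand of $ERR$ carries the factor $\lambda_x\lambda_y=\tau^4/(h_x^2h_y^2)$ (respectively $\lambda_x\lambda_y\lambda_z$), which combines with the $O(h_\xi^2)$ size of each $\delta_\xi^2$-quotient to cancel the mesh ratios and leave a genuine $O(\tau^6)$; this is what makes the dropped remainder harmless relative to the fourth-order target.
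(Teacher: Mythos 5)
Your proposal is correct and takes essentially the same approach as the paper: both decompose the truncation error of the scheme in Eq.~(\ref{hoc-adi-4-b}) into the fourth-order consistency error of the unfactored Pad\'{e} scheme (Eq.~(\ref{hoc-adi-new-2})) plus the factorization error $ERR$, which Theorem~3.1 bounds at sixth order. The only differences are minor: where you derive the base scheme's fourth-order consistency directly from (\ref{padeq4})--(\ref{padeq1}) and make the $\lambda$-power-counting for $ERR$ explicit, the paper simply cites \cite{Liao2014}, and the paper additionally accounts for the $O(h_x^6)+O(h_y^6)+O(h_z^6)$ errors introduced by the substitutions in Eq.~(\ref{hoc-adi-7}) --- a point about the implementable ADI form that your exact-splitting remark on (\ref{hoc-adi-5-a})--(\ref{hoc-adi-5-b}) does not cover, but which lies outside the literal statement about Eq.~(\ref{hoc-adi-4-b}).
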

\begin{proof}
According to Theorem 3.1, if $u(x,y,,z,t)$ and $c(x,y,z)$ are sufficiently smooth,  the difference 
between the numerical scheme defined in Eq. (\ref{hoc-adi-4-b}) and the numerical scheme defined in
 Eq. (\ref{hoc-adi-new-2}) is $O({\tau}^6) + O(h_x^6) + O(h_y^6) + O(h_z^6)$.

On the other hand, it is known\cite{Liao2014} that  the compact Pad\'{e} approximation FD method 
defined in Eq. (\ref{hoc-adi-4-b}) is fourth-order in time and space, with the truncation error  $O({\tau}^4) + O(h_x^4) + O(h_y^4) +O(h_z^4)$.

Moreover, one can see that the truncation errors caused by the substitutions
\[
\frac{{\delta}_x^2}{(1+ \frac{{\delta}_x^2}{12})} \rightarrow {\delta}_x^2\left(1- \frac{{\delta}_x^2}{12}\right),  
\frac{{\delta}_y^2}{(1+ \frac{{\delta}_y^2}{12})} \rightarrow {\delta}_y^2\left(1- \frac{{\delta}_y^2}{12}\right), 
\frac{{\delta}_z^2}{(1+ \frac{{\delta}_z^2}{12})} \rightarrow {\delta}_z^2\left(1- \frac{{\delta}_z^2}{12}\right)
\]
 in Eq. (\ref{hoc-adi-7}) are $O(h_x^6)$, $O(h_y^6)$ and $O(h_z^6)$, respectively.
Base on these, the new compact ADI FD scheme is fourth-order in time and space.
\qed
\end{proof}

\section{Stability Analysis}
It is important  that a numerical method is stable when it is applied to solve time-dependent problem.
Most of the FD schemes for solving  the acoustic wave equation are conditionally stable and
 subject to constraints on time step. The popular Von Neumann analysis is applicable for constant velocity case only, 
therefore,  in this paper we adopted the energy method in \cite{Britt2018} to analyze and prove  the stability of the new method.  

For the sake of simplicity, assume zero source for the wave equation. Consider the  Pad\'{e} approximation 
based fourth-order finite difference scheme 
\begin{equation}\label{stab-eq-0}
	\frac{\delta_t^2}{1+\frac{1}{12}\delta_t^2}u^n_{i,j,k} = c_{i,j,k} \left[
	\frac{\lambda_x \delta_x^2}{1+\frac{1}{12}\delta_x^2} + \frac{\lambda_y \delta_y^2}{1+\frac{1}{12}\delta_y^2} + \frac{\lambda_z \delta_z^2}{1+\frac{1}{12}\delta_z^2}
	\right]u^n_{i,j,k},
\end{equation} 
where $\lambda_x = \frac{\tau^2}{h_x^2}$, $\lambda_y = \frac{\tau^2}{h_y^2}$, $\lambda_z = \frac{\tau^2}{h_z^2}$. For simplicity we assume $h = h_x = h_y = h_z$, $\lambda = \lambda_x = \lambda_y = \lambda_z$, $\omega = \lambda c_{i,j,k} = \frac{{\tau}^2 c_{i,j,k}}{h^2}$. Note that $\omega$ is a grid function but independent of time. Then the above scheme becomes
\begin{equation}
	\frac{1}{\omega}\frac{\delta_t^2}{1+\frac{1}{12}\delta_t^2}u^n_{i,j,k} = \left [
	\frac{ \delta_x^2}{1+\frac{1}{12}\delta_x^2} + \frac{ \delta_y^2}{1+\frac{1}{12}\delta_y^2} + \frac{ \delta_z^2}{1+\frac{1}{12}\delta_z^2}
	\right]u^n_{i,j,k}.
\end{equation}
If we let
\begin{equation*}
	\begin{split}
		\mathcal{L}  = &
		\frac{ \delta_x^2}{1+\frac{1}{12}\delta_x^2} + \frac{ \delta_y^2}{1+\frac{1}{12}\delta_y^2} + \frac{ \delta_z^2}{1+\frac{1}{12}\delta_z^2}\\
		= & \ T_x + T_y + T_z,
	\end{split}
\end{equation*}
The scheme can be written as
\begin{equation}\label{L-scheme}
	\frac{1}{\omega}\frac{\delta_t^2}{1+\frac{1}{12}\delta_t^2}u^n = \mathcal{L} \ u^n,
\end{equation}
where $u^n $ is  the numerical solution at time level $t_n$:
\[
u^n = (u_{i,j,k}^n)_{N\times N \times N}.
\]
Here we assume that $N_x = N_y = N_z = N$.

To prove the stability result, we first state the following lemma, which  can be found in standard functional analysis textbook such as \cite{Teschl2014}.
\begin{lemma}\label{spectral}
	Let $f(s)$ be a real-valued measurable  function, $A$ be a self-adjoint operator. Then
	\begin{equation*}
		\sigma(f(A)) \subseteq \overline{f(\sigma(A))},
	\end{equation*}
	where $\sigma(A)$ is the spectrum of $A$,  $\overline{f(\sigma(A))}$ the closure of the set $f(\sigma(A))$. In addition, if $f$ is continuous and $\sigma(f(A)) = f(\sigma(A))$, then 
	\[
	\sigma(f(A)) = f(\sigma(A)).
	\]
\end{lemma}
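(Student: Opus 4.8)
The plan is to reduce everything to the multiplication-operator model furnished by the spectral theorem, where both inclusions become transparent. First I would invoke the spectral theorem to realize the self-adjoint operator $A$ as multiplication by a real-valued measurable function $g$ on some $L^2(X,\mu)$; under this unitary equivalence the Borel functional calculus sends $f(A)$ to the multiplication operator $M_{f\circ g}$. The key structural fact I would then record is that the spectrum of a multiplication operator $M_h$ equals the essential range of $h$, namely the set of $z$ with $\mu(\{x:|h(x)-z|<\varepsilon\})>0$ for every $\varepsilon>0$. Consequently $\sigma(A)$ is the essential range of $g$ and $\sigma(f(A))$ is the essential range of $f\circ g$, so the lemma reduces to the purely measure-theoretic inclusion that the essential range of $f\circ g$ lies in $\overline{f(\sigma(A))}$.

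For the inclusion itself I would argue by contraposition, and the only auxiliary fact needed is that $g(x)$ lies in $\sigma(A)$ (its essential range) for $\mu$-almost every $x$; this holds because the open complement of the essential range is, by second countability, a countable union of balls each having preimage of measure zero. Now if $z\notin\overline{f(\sigma(A))}$ then $\operatorname{dist}(z,f(\sigma(A)))=\delta>0$, and since $g(x)\in\sigma(A)$ a.e. we get $|f(g(x))-z|\ge\delta$ a.e., so $\mu(\{x:|f(g(x))-z|<\delta\})=0$ and hence $z\notin\sigma(f(A))$. This yields $\sigma(f(A))\subseteq\overline{f(\sigma(A))}$.

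For the equality asserted under continuity (I read the second sentence of the lemma, which as printed is tautological, as the intended claim that $\sigma(f(A))=f(\sigma(A))$ when $f$ is continuous, i.e. the closure may be dropped) I would establish the reverse inclusion $f(\sigma(A))\subseteq\sigma(f(A))$ and then close up. Given $\lambda\in\sigma(A)$ and $\eta>0$, continuity of $f$ supplies $\varepsilon>0$ with $\{x:|g(x)-\lambda|<\varepsilon\}\subseteq\{x:|f(g(x))-f(\lambda)|<\eta\}$; the left set has positive measure since $\lambda$ is in the essential range of $g$, so $f(\lambda)$ lies in the essential range of $f\circ g$, that is $f(\lambda)\in\sigma(f(A))$. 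Taking closures and using that the spectrum is always closed gives $\overline{f(\sigma(A))}\subseteq\sigma(f(A))$, which together with the first inclusion forces $\sigma(f(A))=\overline{f(\sigma(A))}$; when $A$ is bounded, $\sigma(A)$ is compact and $f$ continuous, so $f(\sigma(A))$ is already closed and the closure disappears. The main obstacle is the general operator-theoretic input, the spectral-theorem realization and the essential-range description of the spectrum of $M_h$, rather than the inclusions themselves. I would note, however, that in the finite-difference setting of this paper the relevant operators are Hermitian matrices, so $A=UDU^{*}$ with real eigenvalues and $f(A)=U\,f(D)\,U^{*}$ gives $\sigma(f(A))=f(\sigma(A))$ immediately as a finite, hence closed, set; the lemma is therefore elementary in the case actually used.
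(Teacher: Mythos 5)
Your proof is correct, but it relates to the paper in an unusual way: the paper does not prove this lemma at all — it states it and cites it as a standard fact from a functional analysis textbook (Teschl), then moves on to use it. What you have done is reconstruct the textbook argument the paper outsources: realize $A$ as a multiplication operator $M_g$ on $L^2(X,\mu)$ via the spectral theorem, identify $\sigma(A)$ and $\sigma(f(A))$ with the essential ranges of $g$ and $f\circ g$, prove $g(x)\in\sigma(A)$ for a.e.\ $x$ by the Lindel\"of covering argument, and then get the inclusion $\sigma(f(A))\subseteq\overline{f(\sigma(A))}$ by contraposition and the reverse inclusion $f(\sigma(A))\subseteq\sigma(f(A))$ from continuity; this is complete and correct, including your care about the fact that for unbounded $A$ the set $f(\sigma(A))$ need not be closed, so the closure can only be dropped in the bounded (compact-spectrum) case. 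Two of your side remarks are worth keeping. First, you correctly flag that the second sentence of the lemma as printed is tautological (its hypothesis is its conclusion); the intended claim is that the closure may be dropped when $f$ is continuous and the spectrum is compact, and the paper inherits this misstatement verbatim from wherever it was transcribed. Second, your closing observation is exactly the right perspective on how the lemma is actually used: the operators $T_x$, $T_y$, $T_z$ and $\mathcal{L}$ in the stability analysis are finite Hermitian matrices (the paper itself notes this), so $A=UDU^{*}$ and $f(A)=U\,f(D)\,U^{*}$ give $\sigma(f(A))=f(\sigma(A))$ as a finite set, and none of the measure-theoretic machinery is needed for the paper's purposes. In short, your route buys self-containedness and a corrected statement; the paper's route (citation) buys brevity at the cost of leaving the flawed formulation unexamined.
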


\begin{lemma}
	If $T_x$, $T_y$ and $T_z$ are self-adjoint operators in $L^2$, so is the sum $\mathcal{L} = T_x + T_y + T_z$.
\end{lemma}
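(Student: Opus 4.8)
The plan is to prove self-adjointness directly from the inner-product characterization, exploiting the fact that in the present discrete setting all three operators are bounded, so that no domain technicalities arise. Recall that an operator $A$ on the discrete space $L^2$ with inner product $\langle \cdot, \cdot\rangle$ is self-adjoint precisely when $\langle A u, v\rangle = \langle u, A v\rangle$ for all admissible grid functions $u, v$. By hypothesis this identity already holds for each of $T_x$, $T_y$ and $T_z$ individually, so the task reduces to transferring it to the sum $\mathcal{L} = T_x + T_y + T_z$.

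First I would expand, using bilinearity of the inner product,
\begin{equation*}
\langle \mathcal{L} u, v\rangle = \langle T_x u, v\rangle + \langle T_y u, v\rangle + \langle T_z u, v\rangle.
\end{equation*}
Then I would apply the assumed self-adjointness of each summand to move each operator onto the second argument,
\begin{equation*}
\langle T_x u, v\rangle + \langle T_y u, v\rangle + \langle T_z u, v\rangle = \langle u, T_x v\rangle + \langle u, T_y v\rangle + \langle u, T_z v\rangle,
\end{equation*}
and finally recombine by bilinearity to obtain $\langle u, \mathcal{L} v\rangle$. Chaining these equalities yields $\langle \mathcal{L} u, v\rangle = \langle u, \mathcal{L} v\rangle$ for all $u, v$, which is exactly the statement that $\mathcal{L}$ is self-adjoint.

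The only point that needs care --- and the place where the argument could fail for genuinely unbounded operators --- is the matching of operator domains, since the sum of two self-adjoint operators need not itself be self-adjoint without a hypothesis such as Kato--Rellich. Here this obstacle disappears: each $T_x$ is the rational function $s \mapsto s/(1 + \tfrac{1}{12}s)$ of the symmetric second-difference operator $\delta_x^2$, whose spectrum lies in $[-4, 0]$, so $1 + \tfrac{1}{12}\delta_x^2$ is invertible with spectrum bounded away from zero and $T_x$ is a bounded self-adjoint operator defined on all of $L^2$; the same holds for $T_y$ and $T_z$. The three summands therefore share the common full domain, the manipulations above are unconditionally valid, and the conclusion follows. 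I expect the inner-product bookkeeping to be entirely routine; the substantive content is simply the observation that boundedness removes the domain obstruction, so that the familiar fact $(T_x + T_y + T_z)^* = T_x^* + T_y^* + T_z^*$ applies without qualification.
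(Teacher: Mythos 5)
Your proof is correct, and for the lemma as literally stated it is the natural one: bilinearity of the inner product transfers the identity $\langle Tu,v\rangle = \langle u,Tv\rangle$ from each summand to the sum, and your observation that all three operators are bounded (being rational functions of the symmetric matrices $\delta_x^2$, $\delta_y^2$, $\delta_z^2$, with $1+\frac{1}{12}\delta_x^2$ invertible because the spectrum of $\delta_x^2$ stays above $-4$) removes any domain obstruction. The paper's own proof, however, spends essentially all of its effort on a different task: it verifies the \emph{hypothesis} rather than the implication. It writes $T_x = f(\delta_x^2)$ with $f(s) = s/(1+\frac{1}{12}s)$, notes that $\delta_x^2$ and $(1+\frac{1}{12}\delta_x^2)^{-1}$ are commuting self-adjoint operators so their product $T_x$ is self-adjoint (likewise $T_y$, $T_z$), and then dispatches the summation step in a single sentence --- precisely the step you prove carefully. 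The two arguments are therefore complementary: yours makes rigorous the additivity step the paper treats as obvious, and correctly flags why that step could fail for genuinely unbounded operators; the paper's establishes the self-adjointness of the individual $T$'s, which is what the subsequent spectral and coercivity estimates actually need. Your closing paragraph, which shows each $T_x$ is a bounded self-adjoint operator defined on all of $L^2$, in effect recovers the paper's content, so nothing substantive is missing from your proposal.
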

\begin{proof}
	Let
	\begin{equation*}
		f(s) = \frac{s}{1+\frac{1}{12}s},
	\end{equation*}
	then $T_x = f(\delta_x^2)$. Note that we can write 
	\begin{equation}
		f(s) = s \cdot \left(1+\frac{1}{12}s\right)^{-1} = \left(1+\frac{1}{12}s\right)^{-1} \cdot s ,
	\end{equation}
	which means 
	\begin{equation}
		\delta_x^2 \cdot \left(1+\frac{1}{12}\delta_x^2\right)^{-1} = f(\delta_x^2) = \left(1+\frac{1}{12}\delta_x^2\right)^{-1} \cdot \delta_x^2.
	\end{equation}
	Thus, if we can prove that both $\delta_x^2$ and $(1+\frac{1}{12}\delta_x^2)^{-1}$ are self-adjoint, then $T_x = f(\delta_x)$, as a product of two commutative self-adjoint operators, is also self-adjoint. It is clear that both $\delta_x^2$ and $1+\frac{1}{12}\delta_x^2$ are self-adjoint, then $(1+\frac{1}{12}\delta_x^2)^{-1}$, as an inverse of a self-adjoint operator, is self-adjoint. The proofs for $T_y$ and $T_z$ are similar. Finally $\mathcal{L}$ is self-adjoint since it is a sum of three self-adjoint operators.
	\qed
\end{proof}	
	The spectrum of $\delta_x^2$ with the homogeneous Dirichlet boundary condition is given by
	\begin{equation*}
		\sigma(\delta_x^2) = \left\{-4 \sin^2\left(\frac{j \ \pi }{2(N + 1)}
		\right)
		\right\} \subset (-4,a(h)],
	\end{equation*}
	where $N$ is the number of grid points in the $x$-direction, $j = 1,\cdots,N$,  $a(h) = -4 \sin^2\big(\frac{\pi h}{2}
	\big)$. Then lemma \ref{spectral} asserts that the spectrum of $T_x$ is given by
	\begin{equation*}
		\sigma(T_x) = f(\sigma(\delta_x^2)) = \left\{f\left(-4 \sin^2\left(\frac{j \ \pi }{2(N + 1)}
		\right)\right)
		\right\},
	\end{equation*}
	where $f(s) = \frac{s}{1+\frac{1}{12}s}$ and $j=1,\cdots,N$. Since $f(s)$ is increasing when $s>-12$, we have 
	\begin{equation*}
		\sigma(T_x) \subset (-6,f(a(h))].
	\end{equation*}
	We have $f(a(h)) < 0 $ since $-12 < a(h)<0$ when $h$ is small enough.
	Note that the operators $T_x$, $T_y$ and $T_z$ are actually hermitian matrices, 
	and the fact that the operator $\mathcal{L}$ corresponds to a homogeneous Dirichlet problem, then (for instance, see \cite{Knutson2001})
	\begin{equation*}
		\sigma(\mathcal{L}) \subset (-18,3f(a(h))].
	\end{equation*}
	Thus, we obtained the coercive condition, which is a direct result of its spectrum estimate,
	\begin{equation}\label{coercivity}
	m = -3f(a(h)) \leq -\mathcal{L} \leq 18 = M.
	\end{equation} 

\begin{remark}
	Note that $h$ is the grid size and should be small enough, then
	\begin{equation*}
		a(h) = -4 \sin^2\big(\frac{\pi h}{2}
		\big) \approx -\pi^2 h^2
	\end{equation*}
\end{remark}

Now we state the main result on the stability of the new method in the following theorem.
\begin{theorem}
\label{stability-thm}
Assume that the solution of the acoustic wave equation Eq. (\ref{exact}) is sufficiently smooth,  the new scheme given in Eq. (\ref{hoc-adi-4-b})
is  stable if  
\[
\max_{1 \le i,j,k \le N}    \frac{{\nu}_{i,j,k} \ \tau}{h} <  \frac{1}{\sqrt{3}}.
\]
\end{theorem}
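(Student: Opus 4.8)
The plan is to run the discrete energy argument on the unsplit, unfactored scheme \eqref{L-scheme} and then transfer the conclusion to \eqref{hoc-adi-4-b} via Theorem 3.1: since the factored scheme differs from the unfactored one only by the term $ERR = O(\tau^6)$, stability of the former follows from stability of the latter together with the smoothness hypotheses. So I work with
\[
\frac{1}{\omega}\,\frac{\delta_t^2}{1+\frac{1}{12}\delta_t^2}\,u^n=\mathcal{L}u^n .
\]
First I would clear the temporal Pad\'e denominator. Because $\delta_t^2$ commutes with $\mathcal{L}$ (time versus space differencing) and with the time-independent multiplier $1/\omega$, multiplying through by $1+\tfrac{1}{12}\delta_t^2$ yields the clean two-operator form $A\,\delta_t^2u^n=\mathcal{L}u^n$, where $A=\tfrac{1}{\omega}-\tfrac{1}{12}\mathcal{L}=\tfrac{1}{\omega}+\tfrac{1}{12}\mathcal{M}$ with $\mathcal{M}:=-\mathcal{L}$. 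By the lemma establishing that $\mathcal{L}$ is self-adjoint, $\mathcal{M}$ is self-adjoint, and $1/\omega$ is a positive diagonal multiplication operator, so $A$ is self-adjoint and positive definite. The point to flag is that for variable velocity $1/\omega$ and $\mathcal{L}$ do not commute, so I must avoid any step that diagonalizes them simultaneously; fortunately $A$ is still self-adjoint as a sum of self-adjoint operators, which is all the energy method needs.

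Next I would take the discrete $L^2$ inner product of $A\,\delta_t^2u^n=\mathcal{L}u^n$ with the centered time difference $u^{n+1}-u^{n-1}$. Using self-adjointness of $A$ and the telescoping identity $\langle A(u^{n+1}-2u^n+u^{n-1}),u^{n+1}-u^{n-1}\rangle=\langle A(u^{n+1}-u^n),u^{n+1}-u^n\rangle-\langle A(u^n-u^{n-1}),u^n-u^{n-1}\rangle$ on the left, together with self-adjointness of $\mathcal{L}$ giving $\langle \mathcal{L}u^n,u^{n+1}-u^{n-1}\rangle=\langle\mathcal{L}u^{n+1},u^n\rangle-\langle\mathcal{L}u^n,u^{n-1}\rangle$ on the right, the equation collapses to $E^n=E^{n-1}$; that is, the discrete energy
\[
E^n=\langle A(u^{n+1}-u^n),\,u^{n+1}-u^n\rangle-\langle \mathcal{L}u^{n+1},\,u^n\rangle
\]
is conserved, so $E^n=E^0$ for all $n$. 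This is the discrete analogue of energy conservation for the continuous equation, and it is where the self-adjointness results are actually consumed.

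The decisive step is to show that the CFL condition is exactly what makes $E^n$ a genuine (positive-definite) energy, so that conservation yields a uniform bound. Writing $d=u^{n+1}-u^n$, $w=u^n$ and substituting $A=\tfrac{1}{\omega}+\tfrac{1}{12}\mathcal{M}$, $-\mathcal{L}=\mathcal{M}$, I get
\[
E^n=\Big\langle \tfrac{1}{\omega}d,\,d\Big\rangle+\tfrac{1}{12}\langle \mathcal{M}d,d\rangle+\langle \mathcal{M}w,w\rangle+\langle \mathcal{M}w,d\rangle .
\]
Setting $\tilde d=\mathcal{M}^{1/2}d$, $\tilde w=\mathcal{M}^{1/2}w$, the three $\mathcal{M}$-terms obey $\tfrac{1}{12}\|\tilde d\|^2+\|\tilde w\|^2+\langle\tilde w,\tilde d\rangle\ge\tfrac{1}{12}\|\tilde d\|^2-\|\tilde w\|\|\tilde d\|+\|\tilde w\|^2$, while the kinetic term is bounded below by the spectral estimate \eqref{coercivity}: $\langle\tfrac1\omega d,d\rangle\ge \omega_{\max}^{-1}\|d\|^2\ge (\omega_{\max}M)^{-1}\langle\mathcal{M}d,d\rangle=(\omega_{\max}M)^{-1}\|\tilde d\|^2$, using $\mathcal{M}\le M=18$ with $\omega_{\max}=\max_{i,j,k}\tfrac{\nu_{i,j,k}^2\tau^2}{h^2}$. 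Treating the bound as a quadratic form in $(\|\tilde d\|,\|\tilde w\|)$, its determinant is $\big((\omega_{\max}M)^{-1}+\tfrac{1}{12}\big)-\tfrac14$, which is strictly positive precisely when $\omega_{\max}M<6$, i.e. $\omega_{\max}<\tfrac{6}{M}=\tfrac13$, equivalently $\max_{i,j,k}\tfrac{\nu_{i,j,k}\tau}{h}<\tfrac{1}{\sqrt3}$. Under this strict condition the form is positive definite with slack, so (retaining a fraction of the kinetic term $\langle\tfrac1\omega d,d\rangle$) one obtains $E^n\ge\gamma\big(\|u^{n+1}-u^n\|^2+\langle\mathcal{M}u^n,u^n\rangle\big)$ for some $\gamma>0$; since $m\le\mathcal{M}$ with $m>0$, this controls $\|u^{n+1}-u^n\|$ and $\|u^n\|$, and conservation $E^n=E^0$ bounds them uniformly in $n$, which is the asserted stability.

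I expect the positivity step to be the main obstacle, for two linked reasons: the constant $M=18$ must enter sharply, since anything weaker than $\omega_{\max}M<6$ gives a suboptimal threshold that fails to reproduce the stated $1/\sqrt3$; and the non-commutativity of $1/\omega$ and $\mathcal{L}$ rules out the eigenvalue argument available in the constant-coefficient Von Neumann setting, forcing the completing-the-square estimate to be carried out purely with the operator inequalities $\mathcal{M}\le M$ and $1/\omega\ge\omega_{\max}^{-1}$.
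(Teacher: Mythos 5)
Your proof is correct and follows essentially the same route as the paper: clear the temporal Pad\'{e} denominator in the unfactored scheme, take the inner product with $u^{n+1}-u^{n-1}$, use self-adjointness of $\mathcal{L}$ to obtain a conserved discrete energy (your $E^n$ is algebraically identical to the paper's $S_n$, just written in the variables $u^{n+1}-u^n$ and $u^n$ rather than $u^n\pm u^{n-1}$), and invoke the coercivity bounds $m\le -\mathcal{L}\le M=18$ to make that energy positive definite precisely when $\max_{i,j,k}\nu_{i,j,k}\,\tau/h<1/\sqrt{3}$. The only differences are organizational — the paper's sum/difference splitting produces no cross term while you absorb the cross term via a $2\times 2$ determinant condition, and you explicitly flag the transfer from the unfactored scheme to Eq. (\ref{hoc-adi-4-b}) which the paper leaves implicit — but both arguments consume the same lemmas and yield the identical threshold.
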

 
\begin{proof}
Here we follow
 the strategy of \cite{Britt2018} to prove the stability.  First, let's denote the $L^2$ norm by $\|\cdot \|$, the inner product on $L^2$ by $\langle \cdot,\cdot \rangle$.

From (\ref{L-scheme}), since $\delta_t^2$ commutes with $\mathcal{L}$, we have
\begin{equation}\label{L}
\frac{1}{\omega}\delta_t^2 u^n-\frac{1}{12}\mathcal{L}\delta_t^2 u^n = \mathcal{L}u^n.
\end{equation}
Define $v^n = u^{n}-u^{n-1}$, then $\delta_t^2 u^n = v^{n+1}-v^{n}$, $v^{n+1}+v^{n} = u^{n+1}-u^{n-1}$. Taking
 inner product with $v^{n+1}+v^{n}$ on both sides of Eq. (\ref{L}), noting that $\mathcal{L}$ is self-adjoint, we have
\begin{equation}\label{conservation}
\begin{split}
& \langle \frac{1}{\omega}(v^{n+1}-v^{n}),v^{n+1}+v^{n}\rangle - \frac{1}{12}\langle \mathcal{L}(v^{n+1}-v^{n}),v^{n+1}+v^{n}\rangle \\
= & \langle \mathcal{L}u^n,u^{n+1}-u^{n-1}\rangle \\
\end{split}
\end{equation}
Expanding the right-hand side of Eq. (\ref{conservation}) gives
\begin{equation}\label{conservation conti}
	\begin{split}
		& \langle \mathcal{L}u^n,u^{n+1}-u^{n-1}\rangle \\
		= & \frac{1}{4}\big[\langle \mathcal{L}v^n,v^n \rangle - \langle \mathcal{L}v^{n+1},v^{n+1}\rangle -\langle\mathcal{L}(u^n+u^{n-1}),u^n+u^{n-1}\rangle \\
		+ & \langle\mathcal{L}(u^{n+1}+u^{n}),u^{n+1}+u^{n}\rangle
		\big]
	\end{split}
\end{equation}
Combining (\ref{conservation}) and (\ref{conservation conti}), noting that the cross terms in (\ref{conservation}) eliminate since $\mathcal{L}$ is self-adjoint, we have
\begin{equation}
	\begin{split}
		& \frac{1}{\omega}\langle v^n,v^n \rangle + \frac{1}{6}\langle\mathcal{L} v^{n},v^{n}\rangle - \frac{1}{4}\langle\mathcal{L}(u^{n}+u^{n-1}),u^{n}+u^{n-1}\rangle \\
		= & \frac{1}{\omega}\langle v^{n+1},v^{n+1} \rangle + \frac{1}{6}\langle\mathcal{L} v^{n+1},v^{n+1}\rangle - \frac{1}{4}\langle\mathcal{L}(u^{n+1}+u^{n}),u^{n+1}+u^{n}\rangle
	\end{split}
\end{equation}
If we define
\begin{equation*}
	\begin{split}
		S_{n} =  \frac{1}{\omega}\|v^{n}\|^2 + \frac{1}{6}\langle\mathcal{L} v^{n},v^{n}\rangle - \frac{1}{4}\langle\mathcal{L}(u^{n}+u^{n-1}),u^{n}+u^{n-1}\rangle,
	\end{split}
\end{equation*}
then the above equality is exactly
\begin{equation*}
	S_{n} = S_{n+1}.
\end{equation*}
By the coercivity of $-\mathcal{L}$, $m\leq -\mathcal{L} \leq M$, we have
\begin{equation}
	S_{n} \geq \frac{1}{\omega}\|v^n\|^2 - \frac{M}{6}\|v^n\|^2 + \frac{m}{4}\|u^{n}+u^{n-1}\|^2
\end{equation}
and 
\begin{equation}
	S_{n} \leq \frac{1}{\omega}\|v^n\|^2 - \frac{m}{6}\|v^n\|^2 + \frac{M}{4}\|u^{n}+u^{n-1}\|^2.
\end{equation}
Thus, $S_n$ is equivalent to the energy given by
\begin{equation*}
	\begin{split}
		& \|v^n\|^2 + \|u^n + u^{n-1}\|^2 \\
		= &\|u^n - u^{n-1}\|^2 + \|u^n + u^{n-1}\|^2 \\
		= & 2\|u^n\|^2 + 2\|u^{n-1}\|^2
	\end{split}
\end{equation*}
 if and only if 
\begin{equation*}
	\frac{1}{\omega} > \frac{M}{6},
\end{equation*}
i.e.
\begin{equation}\label{stability-condition}
	\max_{i,j,k} {\nu}_{i,j,k}^2 \cdot \frac{\tau^2 }{h^2}<\frac{6}{M} = \frac{1}{3} \Rightarrow \max_{i,j,k} {\nu}_{i,j,k} \cdot \frac{\tau }{h} < \frac{1}{\sqrt{3}}.
\end{equation}
since $M = 18$ by Eq. (\ref{coercivity}).

Denoted by $e_n$, the error at time $t_n$, the above stability analysis shows that the energy of the error $\|e^n\|^2 + \|e^{n-1}\|^2$ conserves during the solving process, which means the scheme is conditionally stable, as long as the stability condition in Eq. (\ref{stability-condition}) is satisfied.
\qed
\end{proof}

For comparison,  according to \cite{Lines1999}, the stability condition for 3-D problem using the standard second-order  difference scheme  is
\[
 \max_{i,j,k} {\nu}_{i,j,k} \cdot  \frac{\tau}{h}  < \frac{1}{3}.
\]
Moreover, the stability condition for the conventional fourth-order FD scheme(it is second-order in time)  is 
\begin{equation}
 \max_{i,j,k} {\nu}_{i,j,k} \cdot  \frac{\tau}{h}  < \frac{1}{2}. 
\end{equation}
Apparently, the new method has better stability with a larger CFL number.
  Although in each time step a sequence of  tridiagonal linear systems need to be solved to march the
numerical solution,  the high-order ADI  method  outperforms  other existing methods in terms of the overall efficiency. One can also 
see that estimated upper bound  of $\frac{\tau}{h}$ is sharp, as demonstrated 
in the second numerical example.

\section{Numerical examples}\label{num}
In this section  three numerical examples  are solved by the new method to demonstrate the efficiency and
accuracy. The exact solutions of the first and second examples  are  available, 
so  the numerical errors can be calculated to validate the order of convergence and stability condition of the new method. 
In the third example,
the acoustic wave equation  with the Ricker's wavelet source is solved to demonstrate that  the new method is
effective in suppressing numerical dispersion and efficient and accurate in simulating wave propagation in heterogeneous media.
It is worthwhile to mention that in the following numerical examples, all numerical errors are calculated using
maximal norm, although the stability and error analysis were conducted  in $L^2$ norm. Since the maximal and $L^2$ norms are
equivalent, all conclusions confirmed in maximal norm hold in $L^2$ norm.
\subsection{ Example 1}

In this example,
the acoustic wave equation is defined on a rectangular domain 
$\Omega = [0,\pi]\times [0,\pi] \times [0,\pi]$, and $t \in [0, T]$,
\begin{eqnarray}
&  & \hspace{-0.3in} u_{tt}  =  \left[1+ \left(\frac{x}{\pi}\right)^2 + \left(\frac{y}{\pi}\right)^2 + \left(\frac{z}{\pi}\right)^2\right] \Delta u + s(x,y,z,t), \ (x,y,z,t) \in \Omega \times [0,T], \nonumber \\
& & \hspace{-0.3in}  u(x,y,z,0)  =   \sin(x) \sin(y) \sin(z), \quad (x,y,z) \in \Omega, \nonumber \\
& & \hspace{-0.3in} u_t(x,y,z,0)  =  0, \quad (x,y,z) \in \Omega, \nonumber \\
& & \hspace{-0.3in} u|_{\partial \Omega}  =  0,  \quad (x,y,z,t) \in \partial \Omega \times [0,T],\nonumber 
\end{eqnarray}
  with the source function given by
\[
s(x,y,z,t) = \left[ 3+ 2\left(\frac{x}{\pi}\right)^2  + 2\left(\frac{y}{\pi}\right)^2  + 2\left(\frac{z}{\pi}\right)^2 \right] \ \cos(t)\sin(x)\sin(y)\sin(z)
\]
and the analytical solution  given by
\[
u(x,y,z,t) = \cos(t)\sin(x)\sin(y)\sin(z).
\]
 It is noted that this problem has zero boundary condition, which is chosen to simplify programming. A more general example
with non-zero boundary condition will be solved in the next example. In all numerical simulations, the domain $\Omega$ is 
divided into an $N_x \times N_y \times N_z$ grid.  The time domain is uniformly divided into $N_t$ subdomains.
To simplify the discussion,  uniform grid size $h$ is used  in $x$, $y$ and $z$ directions. To validate the fourth-order convergence in space,  we 
 fixed $\tau = 0.0025$ so the temporal 
truncation error  is negligible. The  errors in maximal  norm obtained by using different $h$ are included 
in Table \ref{ex1-t1},
which clearly show that  the new method is fourth-order accurate in space. 
Here the numerical order is calculated using the following formula
\[
\text{Conv.  Order} =\frac{ \log(E(h_1)/E(h_2))}{\log(h_1/h_2)},
\]
where the numerical error $E(h)$ is defined by
\[
E(h) = \max_{\substack{1 \le i \le N_x \\  1 \le j \le N_y \\ 1 \le k \le N_z}}  \left| u(x_i,y_j,z_k, T) - u_{i,j,k}^{N_t}  \right|.
\]
Here $u(x_i,y_j,z_k,T)$ is the exact solution of $u$ at the grid point $(x_i, y_j, z_k)$ and time $t = T$, and $u_{i,j,k}^{N_t}$ is the numerical solution at the same grid point and time $t = T$ that is computed using stepsize $h$. The numerical error $E(h,\tau)$ included in Tables (\ref{ex1-t3} - {\ref{ex2-t3}}) is defined similarly,  with the only difference that the time stepsize $\tau$ is changed when $h$ is changed.

It is clear that the errors are reduced roughly by a factor  $16$
when  $h$ is reduced by a factor  $2$. 
We notice that the convergence order is slightly lower than
fourth-order, due to the round-off errors. 
\begin{table}[ht]
\begin{center}
\caption{Numerical errors in maximal norm for {\bf example 1} with $\tau = 0.0025$ at $T = 1$.}
\label{ex1-t1}
\begin{tabular}{|c|c|c|c|c|c|}  \hline
$h$ & $\pi/10$ & $\pi/16$ & $\pi/20$ & $\pi/32$ & $\pi/40$ \\ \hline
$E(h)$ &  4.3196e-04 &  5.4662e-05  & 2.1191e-05 & 2.5748e-06 & 9.9448e-07 \\ \hline
Conv. Order & - & 3.3374 & 3.6593  & 3.8132  & 3.7422   \\ \hline
CPU time(s)  & 5.4699 & 9.1399 &   15.1600 & 42.3399 &  68.5999\\ \hline
\end{tabular}
\end{center}
\end{table}
To show that the method is fourth-order accurate in time, $h$ and $\tau$ are 
simultaneously reduced by the same factor  to ensure that 
the CFL condition  is satisfied, since
using very small  $h$ to verify the order in time will 
  violate the stability condition. Therefore, we verify the order 
of convergence in time through the  following argument.  Suppose the numerical
scheme is  $pth$-order accurate  in time and fourth-order in space, with $p < 4$,  halving  $h$ and $\tau$ several times, the truncation error in time 
will become the dominating error, thus the total error will be reduced by a factor
of $2^p < 16$ when $h$ and $\tau$ been halved. In the following numerical test cases, we start from 
$h=\pi/10, \tau = 1/16$ (the parameters are chosen  to satisfy the  stability condition) and each time we halve 
both $h$ and $\tau$. The result in Table \ref{ex1-t3} clearly indicates that the total error is reduced by 
a factor $16$ (roughly)  when $h$ and $\tau$ are halved, which confirmed that the convergence order in time is fourth-order.
\begin{table}[ht]
\begin{center}
\caption{ Numerical errors in xaximal norm for {\bf example 1} with various $h$ and $\tau$. }
\label{ex1-t3}
\begin{tabular}{|c|c|c|c|c|}  \hline
$(h,  \tau)$  &$ (\pi/10,1/16)$ & $(\pi/20,1/32)$ & $(\pi/40, 1/64)$ & $(\pi/80,1/128)$ \\ \hline
$E(h,\tau)$ & 4.0768e-04  & 2.0790e-05 &  9.5059e-07 & 4.2791e-08  \\ \hline  
$\frac{E(h,\tau)}{E(h/2,\tau/2)}$ & - & 19.6094  & 21.8706  & 22.2147  \\ \hline
Conv. Order & - &   4.2934 &  4.45092 & 4.4734  \\ \hline
\end{tabular}
\end{center}
\end{table}
Furthermore,  it is interesting that the convergence order is slightly higher than 4. One possible
explanation is that the truncation errors are canceled during the computation.
The new method is an implicit scheme, so the computational cost  in each time step
is higher than that of the explicit method,  however, the overall computational efficiency has been greatly
 improved due to the high-order convergence and larger time step size $\tau$ being used.

\subsection{Example 2}

In the second example, we solve a more general acoustic wave equation defined on $[0, \ \pi]\times [0, \ \pi] \times [0, \ \pi] \times [0, \ T]$ with non-zero boundary conditions. The analytical solution for the following
equation 
\[
u_{tt}  = \left(1+ \sin^2x + \sin^2y + \sin^2z\right) (u_{xx} + u_{yy} + u_{zz}) + s(x,y,z,t)
\]
is given by
\[
u(x,y,z,t) = e^{-t}\cos(x)\cos(y)\cos(z),
\]
where $s(x,y,z,t) = (4+ 3(\sin^2x + \sin^2y + \sin^2z))e^{-t}\cos(x)\cos(y)\cos(z)$. 
For this example, the boundary conditions are non-zero. For example, on the plane $x = 0$, the boundary condition is given by
\[
u(0,y,z,t) = e^{-t}\cos(y)\cos(z), \ \   (y, z) \in [0, \ \pi]\times [0, \ \pi], \  t > 0.
\]
To demonstrate the fourth-order convergence in space and time, we reduce $\tau$ and $h$ simultaneously by the same factor and
record the numerical errors in Table \ref{ex2-t1}.
\begin{table}[ht]
\begin{center}
\caption{ Numerical errors in maximal norm for {\bf example 2} with various $h$ and $\tau$. }
\label{ex2-t1}
\begin{tabular}{|c|c|c|c|c|}  \hline
$(h,  \tau)$  &$ (\pi/16,1/20)$ & $(\pi/32,1/40)$ & $(\pi/64, 1/80)$ & $(\pi/128,1/160)$ \\ \hline
$E(h,\tau)$ & 5.1391e-05  & 4.2849e-06 &  3.9569e-07 &  2.9088e-08 \\ \hline  
$\frac{E(h,\tau)}{E(h/2,\tau/2)}$ & - & 11.9935  &  10.8289 &  13.6032\\ \hline
Conv. Order & - &   3.5842 &  3.4368 &  3.7659 \\ \hline
CPU time(s)  & 6.83 & 22.09 &   128.04 & 806.29 \\ \hline
\end{tabular}
\end{center}
\end{table}

We then show that the estimated CFL constraint is sharp. According to the proof, the method is conditionally stable with
the CFL condition $\max_{i,j,k} \ {\nu}_{i,j,k}\frac{\tau}{h} < \frac{1}{\sqrt{3}}$. The maximum value of $\nu$ is given by
\[
{\nu}_{max} =  \sqrt{\max_{0 \le x,y,z \le \pi} ( 1+ \sin^2x + \sin^2y + \sin^2z )} = 2.
\]
Therefore, the following stability condition 
\[
\frac{\tau}{h} < \frac{1}{2\sqrt{3}} \approx 0.288675134594813
\]
is required.

First we choose $\tau$ and $h$ such that $\frac{\tau}{h}$  is slightly less than $\frac{1}{2\sqrt{3}}$ so the stability condition 
given by Theorem \ref{stability-thm} is satisfied. For all test  cases in Table \ref{ex2-t2},  we have
\[
\frac{\tau}{h} = \frac{9}{10 \pi} \approx 0.286478897565412 < \frac{1}{2\sqrt{3}} \approx 0.288675134594813.
\]
The numerical
results in Table \ref{ex2-t2}  confirmed that the numerical method is stable when the 
stability condition is met. As can be seen, when $\tau$ and $h$ are reduced, the numerical error is also reduced, showing a convergence
order between $3.25$ and $4$.  The  noticeable deviation from a perfect fourth oder in convergence is possibly caused by the fact that 
$\frac{\tau}{h}$ is very close to the CFL condition. Thus, the CFL condition might be slightly violated due to some random roundoff error, which then  deteriorates the convergence order.  Nevertheless, the numerical results in Table \ref{ex2-t2} clearly show that the method is stable when $\frac{\tau}{h} < \frac{1}{2\sqrt{3}}$. 
\begin{table}[ht]
\begin{center}
\caption{ Numerical errors in maximal norm for {\bf example 2} with  $\frac{\tau}{h} < \frac{1}{2\sqrt{3}}$. }
\label{ex2-t2}
\begin{tabular}{|c|c|c|c|c|}  \hline
$(h,  \tau)$  &$ (\pi/18,1/20)$ & $(\pi/36,1/40)$ & $(\pi/54,1/60)$ & $(\pi/72, 1/80)$  \\ \hline
$E(h,\tau)$ & 3.3689e-05 & 2.7867e-06 &  6.9049e-07 & 2.7001e-07   \\ \hline  
$\frac{E(h_1,{\tau}_1)}{E(h_2,{\tau}_2)}$ & - & 11.9935  &  4.0358 &   2.5573  \\ \hline
Conv. Order & - &   3.5842 &  3.4410 &  3.2638 \\ \hline
\end{tabular}
\end{center}
\end{table}

We then numerically validate  that the estimated CFL condition is a necessary condition for stability. To this end, we choose 
$\tau$ and $h$ such that $\frac{\tau}{h}$ is slightly greater than $\frac{1}{2\sqrt{3}}$, thus, the stability condition given by
Theorem \ref{stability-thm} is slightly violated. As shown in Table \ref{ex2-t3}, 
the ratio $\frac{\tau}{h} = \frac{19}{20 \pi} \approx 0.302394 > \frac{1}{2\sqrt{3}}$.
The numerical results clearly show that the method is not stable, as the numerical solution is not convergent, when 
$\tau$ and $h$ are reduced. Instead, 
when $\tau$ and $h$ are reduced, the maximal error increases  and goes to infinity.
\begin{table}[ht]
\begin{center}
\caption{ Numerical errors in maximal norm for {\bf example 2} with  $\frac{\tau}{h} > \frac{1}{2\sqrt{3}}$. }
\label{ex2-t3}
\begin{tabular}{|c|c|c|c|c|}  \hline
$(h,  \tau)$  &$ (\pi/19,1/20)$ & $(\pi/38,1/40)$ & $(\pi/57,1/60)$ & $(\pi/76, 1/80)$  \\ \hline
$E(h,\tau)$ & 8.5884e-05 & 1.1864e-05 &   7.2660e-01 &  1.3165e+06  \\ \hline  
$\frac{E(h_1,{\tau}_1)}{E(h_2,{\tau}_2)}$ & - &  7.2387 &  $\times$  & $\times$  \\ \hline
Conv. Order & - & 2.8557   & $\times$   & $\times$   \\ \hline
\end{tabular}
\end{center}
\end{table}
Here \enquote{$\times$} in the table means that no convergence is obtained.

\subsection{ Example 3}

To demonstrate the efficiency of the new method and show the effectiveness in suppressing numerical dispersion, we solve
a realistic problem in which the seismic wave is generated by  a Ricker wavelet source located 
at the centre of a three-dimensional
domain   $[0m,\ 1280 m]\times [0m,\ 1280 m] \times [0m, \ 800 m]$. The velocity model is given by
\[
 \nu(x,y,z) = 1200 + 400 \left(\frac{x}{x_{max}}\right)^2 + 100 \left(\frac{y}{y_{max}}\right)^2  + 800 \left(\frac{z}{z_{max}}\right)^2,
\]
where $x_{max}= 1280m$, $y_{max}= 1280m$ and $z_{max} = 800m$, respectively.  
Therefore, the maximal  and minimal wave speeds over the domain are
\[
{\nu}_{max} = \max_{ x,y,z } \ \nu(x,y,z) = 2500 m/s
\]
and 
\[
{\nu}_{min} = \min_{x,y,z } \  \nu(x,y,z) = 1200 m/s,
\]
 respectively.
The  Ricker wavelet source function is given by
\[
s(x,y,z, t) = \delta(x-x_0,y-y_0,z-z_0)\left[1-2{\pi}^2 f_p^2(t-d_r)^2\right]e^{-{\pi}^2 f_p^2(t-d_r)^2},
\]
 where $f_{p}=10 Hz$ is the dominant frequency, $d_r = 0.5/f_p$ is the temporal delay to ensure zero initial conditions.  
The centre of the domain is located at $(x_0,y_0,z_0) = (640, 640m, 400m)$. The space and time step sizes are chosen to
satisfy the CFL condition. 
Moreover, the Nyquist sampling theorem states that {\bf the sampling frequency should be at least twice the highest
frequency contained in the signal to avoid aliasing}.  As a rule of thumb, 
at least 10 grid points per wavelength are required in finite difference discretization. Simple calculation shows that the minimal 
wavelength is given by ${\nu}_{min}/f_p = 120 m$, which sets the upper limit for $h$ as  $h_{max} = 12 m$. 
On the other hand, the CFL condition indicates that
\[
{\nu}_{max} \ \frac{\tau}{h} < \frac{1}{\sqrt{3}} \Rightarrow \frac{\tau}{h} < \frac{1}{2500 \sqrt{3}}
\]
 For all numerical simulations of this example, the uniform grid $h =10m$ and $\tau = 0.001 s$ are used to ensure stability and avoid aliasing. 

We plot the  wavefield snapshots  for central slices in $x-, y- $ and $z-$ directions at $t = 0.2s,  t = 0.3s, t = 0.4s $ and $t = 0.5s$ in 
Figs. (\ref{ex3-fig-1} -  \ref{ex3-fig-3}), respectively.  Apparently the computed wave fronts accurately reflect the wave velocity.  For example,
 in Fig. \ref{ex3-fig-1} the simulated wavefields accurately
describe the wave propagation.  The x-sections are the slices of the 3D wavefields at $x = x_{max}/2$. Since the wave
velocity increases more rapidly in $z$ direction than it does in $y$ direction, we clearly see that the wavefront moves faster in $z$ direction.
Moreover,  the wave velocity is a monotone increasing function of $y$ and $z$, the wavefronts moves faster at the right side  than at the left side, and moves faster in the lower side than in the upper side of  the domain.

Fig. \ref{ex3-fig-2} shows the snapshots of the wavefields  when $y= y_{max}/2$ is fixed. Again, the velocity is a function 
of two variables $x$ and $z$, and
the velocity increases faster in $z$. Clearly the wavefronts moves faster in $z$ direction. While in the same direction, the wave propagates 
faster when the variable is at the large side. For example, the wavefront hits the right boundary before it hits the left boundary.

Finally, in Fig. \ref{ex3-fig-3},  the wavefields slices demonstrate similar wave propagations, which is also expected from theoretical analysis.

In terms of the numerical dispersion, significant improvement can be observed as well.  As shown in these figures,  there is no
visible numerical dispersions, which indicated that the new numerical algorithm is accurate and effective in suppressing 
numerical dispersion. 

\begin{figure}[ht]
\begin{center}
 \includegraphics[width=0.99\textwidth]{./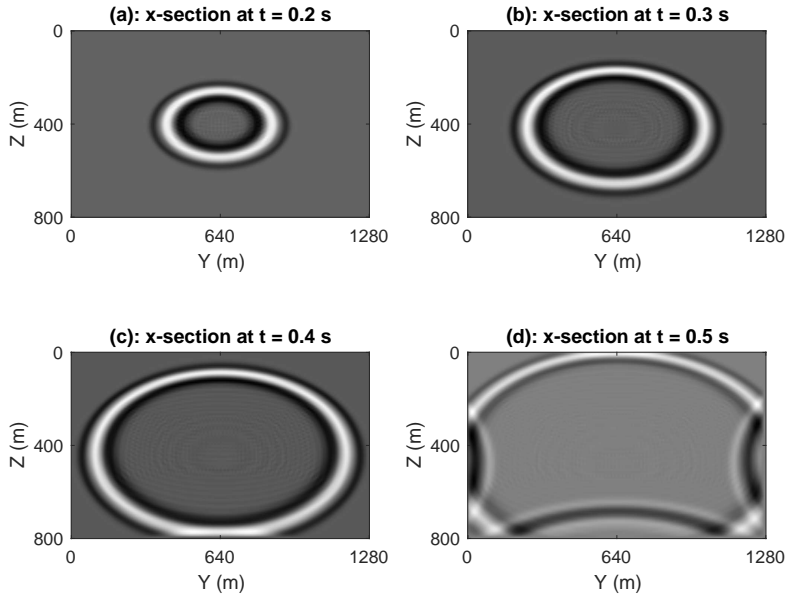}
\caption{Snapshots of x-section of wavefields computed by the new fourth-order compact  method at  (a) t = 0.2 seconds;  (b) t = 0.3 seconds;  (c) 
t = 0.4 seconds; (d) t = 0.5 seconds.}
\label{ex3-fig-1}
\end{center}
\end{figure}

\begin{figure}[ht]
\begin{center}
 \includegraphics[width=0.99\textwidth]{./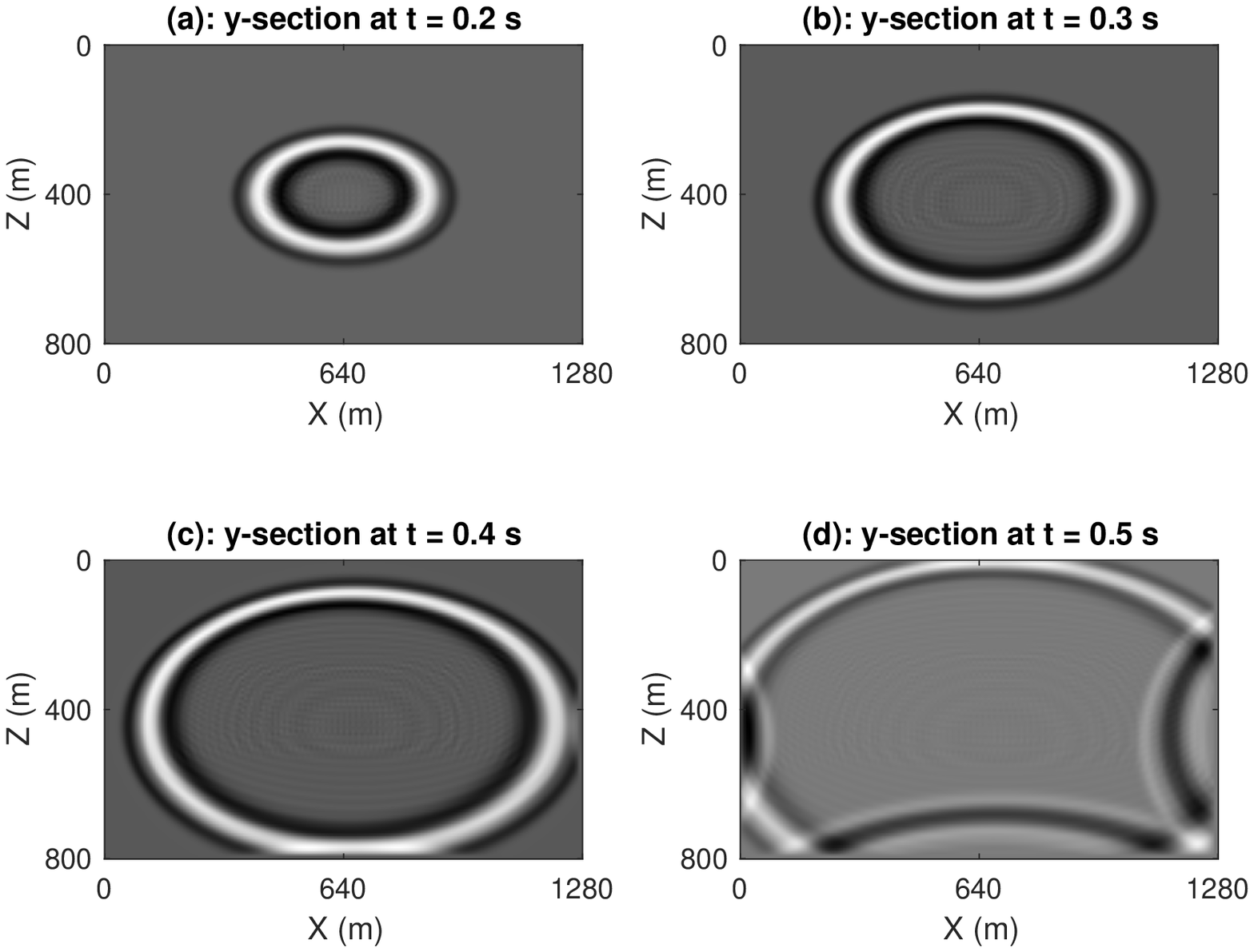}
\caption{Snapshots of y-section of wavefields computed by the new fourth-order compact  method at  (a) t = 0.2 seconds;  (b) t = 0.3 seconds;  (c) 
t = 0.4 seconds; (d) t = 0.5 seconds.}
\label{ex3-fig-2}
\end{center}
\end{figure}

\begin{figure}[ht]
\begin{center}
 \includegraphics[width=0.99\textwidth]{./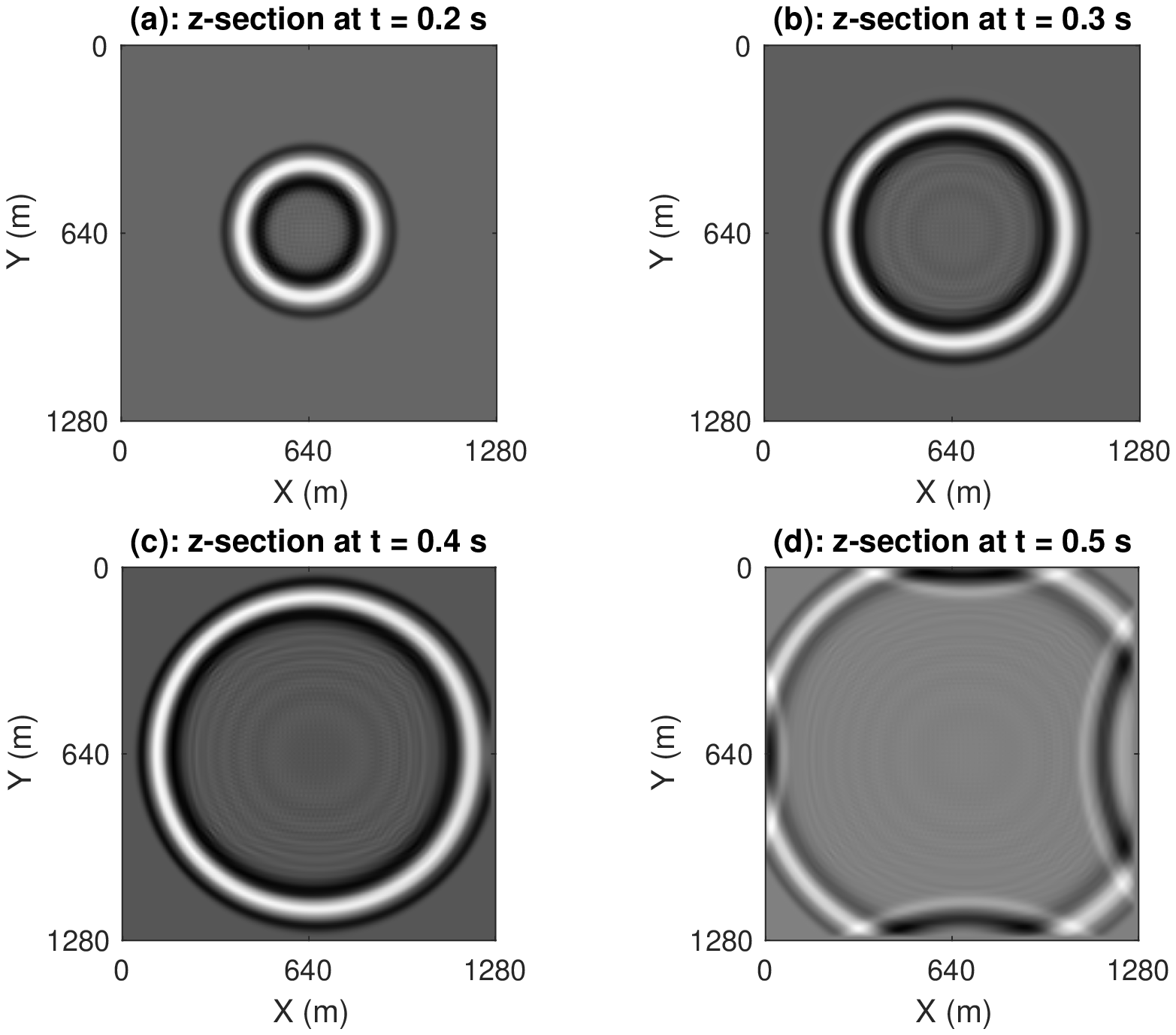}
\caption{Snapshots of z-section of wavefields computed by the new fourth-order compact  method at  (a) t = 0.2 seconds;  (b) t = 0.3 seconds;  (c) 
t = 0.4 seconds; (d) t = 0.5 seconds.}
\label{ex3-fig-3}
\end{center}
\end{figure}

\section{Conclusion and future work}
A compact  fourth-order  ADI FD scheme 
has been developed  to solve the three-dimensional acoustic wave equation in heterogeneous media.
The new method is efficiently implemented using the ADI technique, which splits the original three-dimensional problem into a series of one-dimensional problems. 
The fourth-order convergence  in time and space has been
 validated by two numerical examples for which the exact solutions are available. 
A more realistic problem   has been solved  to  demonstrate that the new method  is robust,  accurate and  efficient
 in seismic wave propagation simulation.  Moreover, the conditional stability of the new method has been rigidly proved for the
variable coefficient case. It has been shown that the new method has a larger CFL number than other conventional finite difference methods. 
Several numerical tests has been performed to verify that the estimated upper bounds of CFL is sharp. It is expected that this new method will find extensive
applications in  numerical seismic  modelling on complex geological models, and seismic inversion problems.
 In the future we plan to take more realistic boundary condition for instance the  absorbing boundary condition, or 
the perfectly matched layer boundary condition into consideration.

\section*{Acknowledgments}
 The research work   is supported by the Canada NSERC  individual Discovery Grant and  the Mitacs Elevate program. We deeply appreciate
the insightful and constructive comments by the anonymous reviewers.

\appendix
\section{Proof of Theorem 3.1}
{\bf Proof:}

First, let's expand ${\delta}_t^2 u_{i,j,k}^n$ by Taylor series at time $t_n$ and the grid point $(x_i, y_j, z_k)$ as
\begin{eqnarray}
\hspace{-1.5in} {\delta}_t^2 u_{i,j,k}^n & = &  u_{i,j,k}^{n+1} -2 u_{i,j,k}^n +u_{i,j,k}^{n-1} \nonumber \\
&   = &  {\tau}^2 \  \frac{{\partial}^2 u}{\partial t^2}|_{i,j,k}^n +\frac{{\tau}^4}{12}  \ \frac{{\partial}^4 u}{\partial t^4}|_{i,j,k}^n +
\frac{{\tau}^6}{360}  \frac{{\partial}^6 u(x_i,y_j,z_k,{\tau}_n^*)}{\partial t^6}  \label{error-term-2},
\end{eqnarray}
where ${\tau}_n^* \in (t_{n-1}, t_{n+1})$.

By  Pad\'{e} approximation,    if $v(x,y,z,t)$ is a sufficiently smooth function, we have
\begin{equation}\label{error-term-3}
 \frac{{\delta}_y^2}{ 1+\frac{1}{12}{\delta}_y^2 } v_{i,j,k}^n = {\delta}_y^2\left(1 -\frac{{\delta}_y^2}{12}\right) v_{i,j,k}^n  + \frac{ h_y^6}{144} \frac{{\partial}^6 v(x_i,{y}_j^*,z_k,t_n)}{\partial y^6},
\end{equation}
where ${y}_j^* \in (y_{j-1},y_{j+1})$.

On the other hand, 
\begin{eqnarray}
{\delta}_y^2 \left(1-\frac{1}{12}{\delta}_y^2\right) v_{i,j,k}^n & = & \left ({\delta}_y^2 - \frac{1}{12}{\delta}_y^2 {\delta}_y^2 \right) v_{i,j,k}^n \nonumber\\
& &\hspace{-1.5in}  = \frac{-v_{i,j+2,k}^n + 16 v_{i,j+1,k}^n  -30 v_{i,j,k}^n  + 16v_{i,j-1,k}^n  - v_{i,j-2,k}^n}{12}  \nonumber \\
& & \hspace{-1.5in} = h_y^2 \  \frac{{\partial}^2 v}{\partial y^2}|_{i,j,k}^n
-\frac{2}{15} h_y^6 \  \frac{{\partial}^6 v(x_i,{y}_j^{**},z_k,t_n)}{\partial y^6},  \label{error-term-5a}
\end{eqnarray}
where $y_j^{**} \in (y_{j-1}, y_{j+1})$.

Letting $v_{i,j,k}^n = {\delta}_t^2 u_{i,j,k}^n$ and combining Eq. (\ref{error-term-5}) with Eq. (\ref{error-term-5a})  lead to
\begin{equation}\label{error-term-5}
\frac{{\delta}_y^2}{ 1+\frac{1}{12}{\delta}_y^2 } \ {\delta}_t^2 u_{i,j,k}^n   =  h_y^2 \  \frac{{\partial}^2 v}{\partial y^2}|_{i,j,k}^n
-\frac{2 h_y^6}{15}  \  \frac{{\partial}^6 v(x_i,y_j^{**},z_k,t_n)}{\partial y^6} + \frac{h_y^6}{144} \  \frac{{\partial}^6 v(x_i,y_j^{*}, z_k,t_n)}{\partial y^6}.
\end{equation}

Using Eq. (\ref{error-term-2}), it then follows that
\begin{eqnarray}
& & {\lambda}_y \  c_{i,j,k}  \frac{{\delta}_y^2}{ 1+\frac{1}{12}{\delta}_y^2 } {\delta}_t^2 u_{i,j,k}^n   \nonumber \\
 & &  = {\lambda}_y \  c_{i,j,k}  \frac{{\delta}_y^2}{ 1+\frac{1}{12}{\delta}_y^2 } \left[{\tau}^2 \  \frac{{\partial}^2 u}{\partial t^2}|_{i,j,k}^n +\frac{{\tau}^4}{12}  \ \frac{{\partial}^4 u}{\partial t^4}|_{i,j,k}^n +
\frac{{\tau}^6}{360}  \frac{{\partial}^6 u(x_i,y_j,z_k,{\tau}_n^*)}{\partial t^6} \right] \nonumber \\
& &  = {\tau}^4 \left [ c_{i,j,k} \frac{{\partial}^4 u}{\partial y^2 \partial t^2}|_{i,j,k}^n \right]  + O( h_y^6) + O({\tau}^6) + O({\tau}^4 h_y^4). \label{error-term-6}
 \end{eqnarray}

Further, let $w(x,y,z,t) = c(x,y,z) \frac{{\partial }^4 u}{\partial y^2 \partial t^2}$,  then the first term on the right-hand side of Eq. (\ref{error-term-1}) can be written as
\begin{eqnarray}
& & \hspace{-0.4in} \frac{{\lambda}_x }{144} \ c_{i,j,k}  \frac{{\delta}_x^2}{ 1+\frac{{\delta}_x^2}{12} } \  {\lambda}_y \  c_{i,j,k} 
 \frac{{\delta}_y^2}{ 1+\frac{{\delta}_y^2}{12} } {\delta}_t^2 u_{i,j,k}^n  \nonumber \\
& &  \hspace{-0.4in}  = {\tau}^4\ \frac{c_{i,j,k}}{144}  \ \left[ \frac{{\tau}^2}{h_x^2} \  \frac{{\delta}_x^2}{ 1+\frac{1}{12}{\delta}_x^2} \  w_{i,j,k}^n \right]   + O(h_y^6)+O({\tau}^6) + O({\tau}^4 h_y^4) \label{error-term-7}\\
& & \hspace{-0.4in} =  {\tau}^4\ \frac{c_{i,j,k}}{144}  \ \left[  \frac{{\tau}^2}{h_x^2} \ \left( {\delta}_x^2 \left(1-\frac{1}{12}{\delta}_x^2\right)  \  w_{i,j,k}^n  +O(h_x^6) \right) \right] +  O(h_y^6)+O({\tau}^6)   + O({\tau}^4 h_y^4). \nonumber
\end{eqnarray}
Expanding ${\delta}_x^2 (1-\frac{1}{12}{\delta}_x^2)  \  w_{i,j,k}^n$ we obtain
\begin{eqnarray}
& & {\delta}_x^2 \left(1-\frac{1}{12}{\delta}_x^2\right)  \  w_{i,j,k}^n =\nonumber \\
& &  \frac{1}{12}\left[-w_{i-2,j,k}^n +16w_{i-1,j,k}^n -30w_{i,j,k}^n +16w_{i+1,j,k}^n - w_{i+2,j,k}^n\right]. \label{error-term-8}
\end{eqnarray}
 Using  Taylor series expansion, we can simplify it to
\begin{equation}\label{error-term-9}
{\delta}_x^2 (1-\frac{1}{12}{\delta}_x^2)  \  w_{i,j,k}^n = h_x^2 \frac{{\partial}^2 w}{\partial x^2}|_{i,j,k}^n - \frac{2 h_x^6}{15} \ \frac{{\partial}^6 w(x_i^*,y_j,z_k,t_n)}{\partial x^6},
\end{equation}
where $x_i^* \in (x_{i-1}, x_{i+1})$.

Inserting Eq. (\ref{error-term-9}) into Eq. (\ref{error-term-7}) leads to
\begin{eqnarray}
& &  \frac{{\lambda}_x }{144} \ c_{i,j,k}  \frac{{\delta}_x^2}{ 1+\frac{{\delta}_x^2}{12} } \  {\lambda}_y \  c_{i,j,k} 
 \frac{{\delta}_y^2}{ 1+\frac{{\delta}_y^2}{12} } {\delta}_t^2 u_{i,j,k}^n \nonumber \\
& &  =  {\tau}^4\ \frac{c_{i,j,k}}{144}  \ \left[  {\tau}^2 \ \frac{{\partial}^2 w}{\partial x^2}|_{i,j,k}^n \right]    + O({\tau}^6) + O(h_y^6) +O({\tau}^6 h_y^4)  \nonumber \\
& & =  {\tau}^6 \ \frac{c_{i,j,k}}{144}  \    \frac{{\partial}^2 w}{\partial x^2}|_{i,j,k}^n  + O({\tau}^6) 
+   O(h_y^6),\label{error-term-10}
\end{eqnarray}
where
\begin{equation}\label{error-term-11}
\frac{{\partial}^2 w}{\partial x^2} = \frac{{\partial}^2 c}{\partial x^2}  \ \frac{{\partial }^4 u}{\partial y^2 \partial t^2}
+2 \  \frac{\partial c}{\partial x} \ \frac{{\partial }^5 u}{\partial x \partial y^2 \partial t^2} + c(x,y,z) \frac{{\partial }^6 u}{\partial x^2 \partial y^2 \partial t^2}.
\end{equation}

Similarly, we can derive the error estimations of other terms in Eq. (\ref{error-term-1}) as the follows:
\begin{eqnarray}
& &  \frac{{\lambda}_y }{144} \ c_{i,j,k}  \frac{{\delta}_y^2}{ 1+\frac{{\delta}_y^2}{12} } \  {\lambda}_z \  c_{i,j,k} 
 \frac{{\delta}_z^2}{ 1+\frac{{\delta}_z^2}{12} } {\delta}_t^2 u_{i,j,k}^n \nonumber \\
& &  =  {\tau}^4\ \frac{c_{i,j,k}}{144}  \ \left[  {\tau}^2 \ \frac{{\partial}^2 \bar{w}}{\partial y^2}|_{i,j,k}^n \right]    + O({\tau}^6) + O(h_z^6)  \nonumber \\
& & =  {\tau}^6 \ \frac{c_{i,j,k}}{144}  \    \frac{{\partial}^2 \bar{w}}{\partial y^2}|_{i,j,k}^n  + O({\tau}^6) 
+   O(h_z^6),\label{error-term-10-b}
\end{eqnarray}
where 
\begin{equation}\label{error-term-11-b}
\frac{{\partial}^2 \bar{w}}{\partial y^2} = \frac{{\partial}^2 c}{\partial y^2}  \ \frac{{\partial }^4 u}{\partial z^2 \partial t^2}
+2 \  \frac{\partial c}{\partial y} \ \frac{{\partial }^5 u}{\partial y \partial z^2 \partial t^2} + c(x,y,z) \frac{{\partial }^6 u}{\partial y^2 \partial z^2 \partial t^2},
\end{equation}
\begin{eqnarray}
& &  \frac{{\lambda}_x }{144} \ c_{i,j,k}  \frac{{\delta}_x^2}{ 1+\frac{{\delta}_x^2}{12} } \  {\lambda}_z \  c_{i,j,k} 
 \frac{{\delta}_z^2}{ 1+\frac{{\delta}_z^2}{12} } {\delta}_t^2 u_{i,j,k}^n   \nonumber \\
& &  =  {\tau}^4\ \frac{c_{i,j,k}}{144}  \ \left[  {\tau}^2 \ \frac{{\partial}^2 \tilde{w}}{\partial x^2}|_{i,j,k}^n \right]    + O({\tau}^6) + O(h_z^6)  \nonumber \\
& & =  {\tau}^6 \ \frac{c_{i,j,k}}{144}  \    \frac{{\partial}^2 \tilde{w}}{\partial x^2}|_{i,j,k}^n  + O({\tau}^6) 
+   O(h_z^6),\label{error-term-10-c}
\end{eqnarray}
where
\begin{equation}\label{error-term-11-c}
\frac{{\partial}^2 \tilde{w}}{\partial x^2} = \frac{{\partial}^2 c}{\partial x^2}  \ \frac{{\partial }^4 u}{\partial z^2 \partial t^2}
+2 \  \frac{\partial c}{\partial x} \ \frac{{\partial }^5 u}{\partial x \partial z^2 \partial t^2} + c(x,y,z) \frac{{\partial }^6 u}{\partial x^2 \partial z^2 \partial t^2},
\end{equation}
and
\begin{eqnarray}
& &  -\frac{{\lambda}_x }{1728} \ c_{i,j,k}  \frac{{\delta}_x^2}{ 1+\frac{{\delta}_x^2}{12} } \  {\lambda}_y \  c_{i,j,k} 
 \frac{{\delta}_y^2}{ 1+\frac{{\delta}_y^2}{12} }  \ {\lambda}_z \  c_{i,j,k} 
 \frac{{\delta}_z^2}{ 1+\frac{{\delta}_z^2}{12} } {\delta}_t^2 u_{i,j,k}^n \nonumber \\
& &  =   -\frac{{\tau}^8 }{1728} \  c_{i,j,k}  \ \left[   \frac{{\partial}^2 }{\partial x^2} \left(\frac{{\partial}^2 \bar{w}}{\partial y^2}
\right)\right]_{i,j,k}^n     + O({\tau}^6) + O(h_y^6),\label{error-term-11-d}
\end{eqnarray}
where $\frac{{\partial}^2 \bar{w}}{\partial y^2} $ is defined in Eq. (\ref{error-term-11-b}).
Therefore, the factoring error $ERR$ in Eq. (\ref{error-term-1}) is given by
\begin{equation}\label{error-term-12}
ERR   =  \bar{M}_t \ {\tau}^6 + \bar{M}_x \ h_x^6 + + \bar{M}_y \ h_y^6  ++ \bar{M}_z \ h_z^6 ,
\end{equation}
provided that the following functions
\[
c(x,y,z), \ \frac{\partial c(x,y,z) }{\partial x^{m_1} \partial y^{m_2} \partial z^{m_3}}, \  
\frac{{\partial}^2 c(x,y,z)}{\partial x^{n_1}\partial y^{n_2}\partial z^{n_3}}
\]
are bounded in $\Omega$, where the non-negative integers satisfying  $m_1 + m_2 + m_3 = 1$ and 
$n_1 + n_2 + n_3 = 2$. Moreover, the solution $u(x,y,z,t)$ and its' derivatives $
 \frac{{\partial }^6 u(x,y,z,t)}{\partial x^{k_1} \partial y^{k_2} \partial z^{k_3} \partial t^{k_4}}$
are bounded in $\Omega \times [0,T]$, where the non-negative integers satisfy $k_1 + k_2 + k_3 + k_4 = 6$.
$\bar{M}_t$, $\bar{M}_x$, $\bar{M}_y$ and $\bar{M}_z$ are positive constants depending on the functions listed above.


\begin{thebibliography}{99}
\bibitem{Bayliss1986} A. Bayliss, K.E. Jordan, B. Lemesurier, E. Turkel,  A fourth-order accurate finite difference scheme
for the computation of elastic waves, Bull. Seismol. Soc. Amer. 76(4)(1986)1115--1132.

\bibitem{Britt2018}
S. Britt, E. Turkel, S. Tsynkov, A High Order Compact Time/Space Finite Difference Scheme for the Wave Equation with Variable Speed of Sound,
 Journal of Scientific Computing (2018): 1-35.

\bibitem{Chen2007} J. B. Chen, High-order time discretizations in seismic modelling, Geophysics, 72 (2007)115--122.

\bibitem{Chu1998} P. Chu, C. Fan, A three-point combined compact difference scheme,
Journal of Computational Physics, 140(1998)370--399.

\bibitem{Etgen2007}
Etgen, John T., Michael J. O'Brien. Computational methods for large-scale 3D acoustic finite-difference modeling: A tutorial.
Geophysics, 72.5 (2007): SM223-SM230.

\bibitem{Cohen1996}
G. Cohen, P. Joly, Construction and analysis of fourth-order finite difference schemes for the acoustic wave equation in non-homogeneous media, 
SIAM J. Numer. Anal., 4(1996)1266--1302.

\bibitem{Dablain1986} M. A. Dablain, The application of high order differencing for the scalar wave equation, Geophysics, 51(1)(1986)54--66.

\bibitem{Das2014} 
S. Das, W. Liao, A. Gupta,  An efficient fourth-order low dispersive finite difference scheme for a 2-D acoustic wave equation.
Journal of computational and Applied Mathematics, 258 (2014): 151-167.

\bibitem{Douglas1966}
J. Douglas Jr., J. Gunn, A general formulation of alternating direction methods part I. Parabolic and hyperbolic problems, Numer. Math. 6 (1966)428--453.

\bibitem{Fairweather1965}
G. Fairweather, A.R. Mitchell, A high accuracy alternating direction method for the wave equation, J. Inst. Math. Appl. 1(1965) 309--316.

\bibitem{Finkelstein2007} B. Finkelstein,  R. Kastner, Finite difference time domain dispersion reduction schemes, J. Comput. Phys.  221 (2007) 422--438.
 
\bibitem{Kelly1976}
K.R. Kelly, R.W. Ward, S. Treitel, E.M. Alford, Synthetic seismograms: A finite difference approach, Geophysics 41(1976)2--27.

\bibitem{Kim2007} 
S. Kim, H. Lim,  High-order schemes for acoustic waveform simulation, Applied Numerical Mathematics 57(2007)402--414.

\bibitem{Knutson2001}
A. Knutson, T. Tao,  Honeycombs and sums of Hermitian matrices, Notices Amer. Math. Soc 48.2 (2001).

\bibitem{Lees1962}
M. Lees, Alternating direction methods for hyperbolic differentials equations, Journal of the society for industrial and applied mathematics 10(1962)610--616.

\bibitem{Levander1988}  A. R.  Levander, Fourth-order finite-difference P-SV seismograms,
Geophysics 53(11)(1988)1425--1436. 

\bibitem{Li2006}
J. Li,  M.R. Visbal, High-order compact schemes for nonlinear dispersive waves, Journal of Scientific Computing 26(1) (2006), 1--23.

\bibitem{Li1991} Z. Li, Compensating finite-difference errors in 3-D migration and modelling, Geophysics 56(1991)1650--1660.

\bibitem{Ristow1997} D. Ristow, T. Ruhl, 3-D implicit finite-difference migration by multiway splitting, Geophysics 62(1997)554--567.

\bibitem{Liao2014} W. Liao,  On the dispersion, stability and accuracy of a compact higher-order finite difference scheme for 3D acoustic wave equation,
Journal of Computational and Applied Mathematics 270 (2014): 571-583.

\bibitem{Liao2018}
W. Liao, P. Yong, H. Dastour, J. Huang, Efficient and accurate numerical simulation of acoustic wave propagation in a 2D heterogeneous media,
Applied Mathematics and Computation 321 (2018): 385-400.
 
\bibitem{Lines1999}
L. Lines, R. Slawinski, R. Bording, A recipe for stability of finite-difference wave-equation computations, Geophysics, 64(3)(1999) 967--969.

\bibitem{Liu2009a}
Y. Liu, M. K. Sen, An implicit staggered-grid finite-difference method for seismic modelling, Geophys. J. Int. 179 (2009) 459--474. 

\bibitem{Nita2008}
B. G. Nita,  Forward scattering series and Pad\'{e} approximants for acoustic wavefield propagation in a vertically 
varying medium, Communications in Computational Physics 3(1)(2008)180--202.

\bibitem{Peaceman1955} G.W. Peaceman, H.H. Rachford, The numerical solution of parabolic and elliptic differential equations,
Journal of the society for industrial and applied mathematics 3(1)(1955)28--41.

\bibitem{Shragge2017}
J. Shragge,  B. Tapley,  Solving the Tensorial 3D Acoustic Wave Equation: A Mimetic Finite-Difference Time-Domain Approach,
Geophysics 82.4 (2017): 1-58.

\bibitem{Shubin1987} G. R. Shubin and J. B. Bell,  A modified equation approach to constructing fourth-order 
methods for acoustic wave propagation, SIAM J. Sci. Statits. Comput  8(2)(1987)135--151.

\bibitem{Shukla2005} R. Shukla, X. Zhong,  Derivation of high-order compact finite difference schemes for 
non-uniform grid using polynomial interpolation, Journal of Computational Physics 204(2)(2005)404--429.

\bibitem{Takeuchi2000} N. Takeuchi, R.J. Geller,  Optimally accurate second order time-domain finite difference scheme for computing synthetic seismograms in  2-D and 3-D media,  Phys. Earth Planet. Int. 119(2000) 99 --131.

\bibitem{Teschl2014}
Teschl, Gerald. Mathematical methods in quantum mechanics. Vol. 157. American Mathematical Soc., 2014.

\bibitem{Yangdh2009} D.H. Yang, N. Wang, S. Chen, G.J. Song,  An explicit method based on the implicit Runge-Kutta algorithm for solving the wave equations, Bulletin of the Seismological Society of America 99(6)(2009) 3340--3354.

\bibitem{Yangdh2010} D.H. Yang,  L. Wang,  A split-step algorithm with effectively suppressing the numerical dispersion for 3D seismic propagation modeling, Bulletin of the Seismological Society of America 100( 4)(2010)1470--1484.

\bibitem{Zhang2011}
Zhang, Wensheng, Li Tong, Eric T. Chung, A new high accuracy locally one-dimensional scheme for the wave equation, Journal of Computational and Applied Mathematics 236.6 (2011) 1343-1353.
\end{thebibliography}
\end{document}